\documentclass[11pt]{article}

\usepackage{amsmath,amsthm,fullpage,amssymb,graphicx,bm,enumerate,color}
\usepackage{fourier,eucal}
\usepackage[bbgreekl]{mathbbol}

\newcommand{\tst}{\textstyle}

\newcommand{\m}[1]{\mathcal{#1}}
\newcommand{\bb}[1]{\mathbb{#1}}
\newcommand{\ur}[1]{\mathrm{#1}}
\newcommand{\s}[1]{\mathsf{#1}}

\newcommand{\eps}{\varepsilon}
\newcommand{\R}{\bb R}
\newcommand{\N}{\bb N}
\newcommand{\wc}{\rightharpoonup}

\renewcommand{\d}{\ur{d}}
\newcommand{\p}{\partial}

\renewcommand{\theta}{{\vartheta}}
\renewcommand{\O}{{\Omega}}
\newcommand{\oO}{\overline{\O}}
\newcommand{\pO}{{\p \O}}
\newcommand{\oh}{\frac{1}{2}}

\newcommand{\la}{\langle}
\newcommand{\ra}{\rangle}

\newcommand{\zu}{{\tilde{u}}}
\newcommand{\zv}{{\tilde{v}}}
\newcommand{\zp}{{\tilde{\phi}}}
\newcommand{\zvp}{{\tilde{\varphi}}}

\newcommand{\W}{{W}}

\newcommand{\Sz}{{\rm (S)$_0$}}
\newcommand{\hu}{{h}}

\newtheorem{theorem}{Theorem}
\newtheorem{corollary}{Corollary}
\newtheorem{lemma}{Lemma}
\newtheorem{remark}{Remark}

\begin{document}

\title{Finite Element Convergence for the Joule Heating Problem with Mixed Boundary Conditions}
\author{Max Jensen\footnote{Department of Mathematical Sciences, University of Durham} \and Axel M\aa lqvist\footnote{Department of Information Technology, Uppsala University}}

\maketitle

\begin{abstract}
We prove strong convergence of conforming finite element approximations to the stationary Joule heating problem with mixed boundary conditions on Lipschitz domains in three spatial dimensions. We show optimal global regularity estimates on creased domains and prove {\em a priori} and {\em a posteriori} bounds for shape regular meshes.
\end{abstract}

\hspace{4mm}

{\bf Keywords:} Joule heating problem, thermistors, {\em a posteriori} error analysis, {\em a priori} error analysis, finite element method.

\hspace{4mm}

{\bf AMS Classifications:} 65N30, 35J60.

% ------------------------------------------------------------------------------------------------------------------------
\section{Introduction}
% ------------------------------------------------------------------------------------------------------------------------

The stationary Joule heating problem is a two way coupled system of non-linear elliptic partial differential equations modelling the heat and electrical potential in a body. The electrical current acts as a heat source in a resistive material while the temperature feeds back to the electrical potential through the electrical conductivity. Joule heating is important in many micro-electromechanical systems, where the effect is used to achieve very exact positioning at the micro scale, e.g.~\cite{HeTi06}. The Joule heating problem is also studied for the design of semiconductors, in particular in the setting of thermistors. In applications boundary conditions of mixed type are typically used.

The main difficulty in proving the existence of finite energy solutions to the Joule heating problem is that, given a finite energy potential, the source term of the heat equation is in general only in $L^1$, which means that the usual variational framework is not directly available. This issue has been studied in \cite{CI88,GaHe94,BoOr08}, for Dirichlet boundary conditions, and later in \cite{CI89,HRS93}, for mixed boundary conditions. Multiplicity of solutions and stability was studied in \cite{CI02}. Similar questions have also been raised for the time dependent case, see for instance~\cite{AC94,YL94,KSF09}.

There have been several works on the numerical solution of the Joule heating problems in recent years. For the steady state formulation both conforming and non-conforming finite element methods have been studied using homogeneous Dirichlet boundary conditions \cite{ZL05,ZYL11} and homogeneous mixed boundary conditions \cite{ZH10}. Under assumption of sufficient regularity of the solution and small data, {\em a priori} error bounds have been derived with convergence rates. There have also been parallel investigations into numerical methods for the time dependent Joule heating problem, see e.g.~\cite{ElLa95,AkLa05}. The assumption on small data can here be avoided since the Gr\"{o}nwall lemma is available. An {\em a posteriori} error bound for a time dependent obstacle thermistor problem is presented in~\cite{AY06}.

In this paper we prove the strong convergence (of subsequences in case of non-unique exact solutions) of Galerkin approximations to finite energy solutions of the Joule heating problem in three dimensions with mixed boundary conditions, using only very mild assumptions on the computational domain and the data. The analysis covers, in particular, conforming finite element approximations. To this end we introduce a truncation operator in the approximate potential without affecting the consistency of the method. Thereby we avoid the assumption of $L^\infty$ bounds on the discrete potential solution, independent of the mesh size, which are used in \cite{HoLa10}. These $L^\infty$ bounds are very difficult to realize in practice in three spatial dimensions. They also impose restrictions on the computational meshes as well as the order of convergence of the method. Under the assumption of a so-called {\em creased} domain together with a sufficiently weak temperature dependency in the electrical conductivity we also prove optimal global regularity estimates together with local estimates guaranteeing smooth solutions away from the boundary given smooth data. We further prove {\em a priori} and {\em a posteriori} error bounds for conforming finite element approximations on shape regular meshes. In our analysis the small data assumption relaxes as the coupling of the equations weakens.

The paper is organized as follows. In Section 2 we introduce the stationary Joule heating problem with mixed boundary conditions. In Section 3 we study the continuity properties of the differential operator to show the convergence of Galerkin approximations to finite energy solutions. In Section 4 study the global and interior regularity of solutions on creased domains. Finally, in Section 5 we derive optimal {\em a priori} and {\em a posteriori} error bounds for $\s h$-adaptive conforming finite element approximations to the Joule heating problem.

% ------------------------------------------------------------------------------------------------------------------------
\section{The Statement of the Stationary Problem}
% ------------------------------------------------------------------------------------------------------------------------

Let $\O$ be a bounded Lipschitz domain in $\R^3$. Let $D_\phi$ and $D_u$ be subsets of $\pO$, whose boundaries $\p D_\phi$ and $\p D_u$ are Lipschitz regular manifolds and set $N_\phi := \pO \setminus \overline{D_\phi}$ and $R_u := \pO \setminus \overline{D_u}$. We shall impose Dirichlet boundary conditions for $\phi$ and $u$ on $D_\phi$ and $D_u$, Neumann conditions for $\phi$ on $N_\phi$ and Robin conditions for $u$ on $R_u$.

The scale of Sobolev spaces is denoted by $\W_s^p$. Let, for $s > 1/p$,
\[
\W_s^p(\O;D_\phi) := \{ v \in \W_s^p(\O) : v|_{D_\phi} = 0 \}.
\]
Define $\W_s^p(\O;D_u)$ analogously and denote $\W_1^2$ spaces also with $H^1$.

Assume for the Dirichlet data that $g_\phi \in W_1^3(\oO)\cap L^\infty(\oO)$ and $g_u \in \W_{1/2}^2(\pO)$ and for the Robin data that $\hu \in \W_{-1/2}^2(\pO)$. Let $\sigma \in C^1(\R)$ be bounded from below by a positive $\sigma_\circ \in \R$ and from above by $\sigma^\circ \in \R$ and let $\kappa \in L^\infty(R_u)$ be non-negative. Assume that there are the Poincar\'e-Friedrichs inequalities
\begin{align}\label{eq:pfi} \begin{array}{rll}
\| \psi \|_{L^2(\O)} \lesssim & \| \nabla \psi \|_{L^2(\O, \R^3)} & \qquad \forall \psi \in \W_1^2(\O;D_\phi),\\[2mm]
\| w \|_{L^2(\O)}    \lesssim & \| \nabla w \|_{L^2(\O, \R^3)} + \| \sqrt{\kappa} \, w \|_{L^2(R_u)} & \qquad \forall w \in \W_1^2(\O;D_u).
\end{array} \end{align}
Allow $D_u = \emptyset$ provided the Poincar\'e-Friedrichs inequality remains valid.

The strong formulation of the Joule heating problem is to find $\phi \in \W_2^\infty(\O)$ and $u \in \W_2^\infty(\O)$ such that
\begin{align} \label{eq:sphi}
- \nabla\cdot(\sigma(u) \nabla \phi) & = 0 \qquad \qquad \Leftrightarrow \qquad \qquad - \sigma(u) \Delta u = \nabla \sigma(u) \cdot \nabla \phi,\\ \nonumber
- \Delta u - \sigma(u) |\nabla \phi |^2 & = 0
\end{align}
with the Dirichlet conditions $\phi|_{D_\phi} = g_\phi$, $u|_{D_u} = g_u$ and the natural boundary conditions \mbox{$\p_\nu \phi = 0$} on $N_\phi$ and $\kappa u + \p_\nu u = \hu$ on $R_u$ with the outward unit normal $\nu$.

\begin{remark}
For some applications the Lipschitz assumption on $\O$ is too restrictive, a good examples being geometries which locally resemble the two-brick domain. We point out that Theorem \ref{thm:exis} remains valid for domains for which the usual Sobolev embedding holds, a trace operator is available and integration-by-parts can be carried out. For example, see \cite{JW84} for more information in this direction. For Theorem \ref{thm:reg}, however, the Lipschitz assumption is an essential part of the definition of creased domains.
\end{remark}

% ------------------------------------------------------------------------------------------------------------------------
\subsection{The Weak Formulation of the Stationary Problem}
% ------------------------------------------------------------------------------------------------------------------------

A weak solution of the stationary Joule heating problem is a
\[
(\phi, u) = (g_\phi + \zp, g_u + \zu) \in H^1(\O) \times H^1(\O)
\]
such that $\zp\in H^1(\O,D_\phi)$, $\zu \in H^1(\O,D_u)$ and
\begin{align} \label{eq:ws} \left. \begin{array}{cccclccl}
\la \sigma(u) \nabla \phi, \nabla \psi \ra & & & = & 0,\\[2mm]
\la \nabla u, \nabla w \ra & + & \la \kappa \, u, w \ra_{R_u} & = & \la \sigma(u) \nabla \phi \cdot \nabla \phi, w \ra & + & \la \hu, w \ra_{R_u}
\end{array} \quad \right\} \end{align}
for all $\psi \in H^1(\O;D_\phi)$ and $\forall \, w \in \W_1^\infty(\O;D_u)$. Indeed, the choice of spaces ensures that $\sigma(u) \nabla \phi \cdot \nabla \phi \in L^1(\O)$ which guarantees that the second equation is meaningful for all $w \in \W_1^\infty(\O;D_u)$.

\begin{lemma} \label{thm:max}
If $(\phi, u)$ is a solution of \eqref{eq:ws} then
\[
g_\circ \le \phi \le g^\circ, \qquad \mbox{ with } \qquad g^\circ =\max_{x \in D_\phi} g_\phi, \; g_\circ=\min_{x \in D_\phi} g_\phi.
\]
\end{lemma}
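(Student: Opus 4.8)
The plan is to establish the maximum principle for $\phi$ by a standard truncation (Stampacchia) argument applied to the first equation of \eqref{eq:ws}. First I would fix the upper bound: set $w^\circ := (\phi - g^\circ)^+ = \max\{\phi - g^\circ, 0\}$. Since $g^\circ = \max_{D_\phi} g_\phi$ and $\phi = g_\phi + \zp$ with $\zp \in H^1(\O;D_\phi)$, on $D_\phi$ we have $\phi = g_\phi \le g^\circ$, so $w^\circ|_{D_\phi} = 0$; moreover $w^\circ \in H^1(\O)$ because $t \mapsto t^+$ is Lipschitz and $\phi \in H^1(\O)$. Hence $w^\circ \in H^1(\O;D_\phi)$ is an admissible test function $\psi$ in the first equation.

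Next I would substitute $\psi = w^\circ$ and use that $\nabla w^\circ = \nabla \phi \cdot \mathbb{1}_{\{\phi > g^\circ\}}$ almost everywhere, together with $\sigma(u) \ge \sigma_\circ > 0$, to obtain
\begin{align*}
0 = \la \sigma(u) \nabla \phi, \nabla w^\circ \ra = \la \sigma(u) \nabla w^\circ, \nabla w^\circ \ra \ge \sigma_\circ \| \nabla w^\circ \|_{L^2(\O)}^2 \ge 0,
\end{align*}
so $\nabla w^\circ = 0$ in $L^2(\O)$. By the first Poincar\'e–Friedrichs inequality in \eqref{eq:pfi} applied to $w^\circ \in \W_1^2(\O;D_\phi)$, this forces $w^\circ = 0$ in $L^2(\O)$, i.e.\ $\phi \le g^\circ$ almost everywhere. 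The lower bound $\phi \ge g_\circ$ follows symmetrically by testing with $\psi = (\phi - g_\circ)^- = \min\{\phi - g_\circ, 0\}$, which again vanishes on $D_\phi$ because there $\phi = g_\phi \ge g_\circ$, and which lies in $H^1(\O;D_\phi)$; the same computation gives $\|\nabla \psi\|_{L^2(\O)} = 0$ and hence $\psi = 0$.

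I expect the only genuine subtleties to be bookkeeping points rather than deep obstacles: one must check that $w^\circ$ indeed lies in the test space $H^1(\O;D_\phi)$, which hinges on the trace of $\phi$ on $D_\phi$ coinciding with $g_\phi$ and on the assumed Lipschitz regularity of $\p D_\phi$ so that the trace and the pointwise truncation interact well (the chain rule for $t^+$ in Sobolev spaces and the fact that truncation commutes with taking traces). A second minor point is that $g^\circ$ and $g_\circ$ are well defined: since $g_\phi \in W_1^3(\oO) \cap L^\infty(\oO)$ its trace on $D_\phi$ is bounded, so the max and min (interpreted as essential sup/inf over $D_\phi$) are finite. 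None of these require the second equation of \eqref{eq:ws} or any property of $u$ beyond $\sigma(u) \ge \sigma_\circ$, so the argument is self-contained within the weak formulation.
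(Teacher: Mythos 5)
Your proposal is correct and follows essentially the same route as the paper: both test the first equation with the truncation $\max(0,\phi-g^\circ)\in H^1(\O;D_\phi)$, deduce $\la\sigma(u)\nabla\chi,\nabla\chi\ra=0$ using $\sigma\ge\sigma_\circ>0$, and conclude via the Poincar\'e--Friedrichs inequality \eqref{eq:pfi}, with the symmetric argument for the lower bound. The extra bookkeeping you supply (admissibility of the truncated function in the test space, finiteness of $g^\circ$, $g_\circ$) is consistent with, and slightly more detailed than, the paper's proof.
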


\begin{proof}
Define $\chi=\max(0,\phi-g^\circ)\in H^1(\O;D_\phi)$. One can use $\chi$ as a test function in equation (\ref{eq:ws}):
\begin{align*}
0=\la \sigma(u) \nabla \phi, \nabla \chi \ra & = \la \sigma(u) \nabla (\phi-g^\circ), \nabla \chi \ra\\[1mm]
& = {\textstyle \int_{{\rm supp}(\chi) \cap \O} \sigma(u) \nabla \chi \cdot \nabla \chi \, \d x} = \la \sigma(u) \nabla \chi, \nabla \chi \ra.
\end{align*}
Now use the Poincar\'e-Friedrichs inequality to get $\|\chi\|_{L^2(\O)}=0$, so $\phi\leq g^0$. An analogous argument with $g_\circ$ gives $\phi \geq g_\circ$. 
\end{proof}

Because of the maximum principle we may introduce an equivalent weak formulation which employs the cut-off functional
\[
\lceil f \rceil := \min(\max(f+g_\phi, g_\circ), g^\circ)-g_\phi.
\]
Then $g_\circ - g_\phi \le \lceil f \rceil \le g^\circ - g_\phi$ and $\lceil \zp \rceil = \zp$. This functional is essential in the proof of the convergence of Galerkin solutions without the need for a discrete maximum principle; a property desirable from the numerical point of view.

\begin{lemma}
The set of functions which satisfy
\begin{align} \label{eq:wse} \left. \begin{array}{ccl}
\la \sigma(u) \nabla \phi, \nabla \psi \ra & = & 0,\\[2mm]
\la \nabla u, \nabla w \ra + \la \kappa \, u, w \ra_{R_u} & = &\\[2mm]
&& \hspace{-25mm} - \la \sigma(u) \, \lceil \zp \rceil \nabla \phi, \nabla w \ra + \la \sigma(u) \nabla g_\phi \cdot \nabla \phi, w \ra + \la \hu, w \ra_{R_u}
\end{array} \quad \right\} \end{align}
for all $(\psi,w) \in H^1(\O;D_\phi) \times H^1(\O;D_u)$ is equal to the set of solutions of~\eqref{eq:ws}.
\end{lemma}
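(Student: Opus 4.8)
The plan is to establish the two set inclusions separately, the common tool being the observation that for a bounded Lipschitz test function $w$ vanishing on $D_u$ the product $\lceil\zp\rceil\,w$ is an admissible test function in the first equation of \eqref{eq:ws}, and that testing with it rewrites $\la\sigma(u)\nabla\phi\cdot\nabla\phi,w\ra$ as precisely the two volume terms appearing on the right of \eqref{eq:wse}. Before that I would note that \eqref{eq:wse} is meaningful for every admissible pair $(\phi,u)=(g_\phi+\zp,g_u+\zu)$: since $\lceil\zp\rceil\in H^1(\O)\cap L^\infty(\O)$ one has $\sigma(u)\lceil\zp\rceil\nabla\phi\in L^2(\O,\R^3)$, while $g_\phi\in W_1^3(\oO)$ together with $H^1(\O)\hookrightarrow L^6(\O)$ gives $\sigma(u)\nabla g_\phi\cdot\nabla\phi\in L^{6/5}(\O)$; consequently every term of \eqref{eq:wse}, on both sides, depends continuously on $w\in H^1(\O;D_u)$.

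For the implication $\eqref{eq:ws}\Rightarrow\eqref{eq:wse}$ let $(\phi,u)$ solve \eqref{eq:ws}. Lemma~\ref{thm:max} gives $g_\circ\le\phi\le g^\circ$, hence $\lceil\zp\rceil=\zp$, and the two first equations already coincide. For $w\in\W_1^\infty(\O;D_u)$ the product $\zp w$ belongs to $H^1(\O;D_\phi)$, because multiplication by the bounded Lipschitz function $w$ maps $H^1(\O)$ continuously into itself and multiplies the trace, which vanishes on $D_\phi$ thanks to $\zp$; inserting $\zp w$ into the first equation of \eqref{eq:ws} and expanding $\nabla(\zp w)=w\nabla\zp+\zp\nabla w$ yields $\la\sigma(u)(\nabla\phi\cdot\nabla\zp),w\ra=-\la\sigma(u)\,\zp\,\nabla\phi,\nabla w\ra$. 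Substituting $\nabla\phi\cdot\nabla\phi=\nabla\phi\cdot\nabla g_\phi+\nabla\phi\cdot\nabla\zp$ into the second equation of \eqref{eq:ws} then turns it into the second equation of \eqref{eq:wse}, valid for all $w\in\W_1^\infty(\O;D_u)$. Since $\W_1^\infty(\O;D_u)$ is dense in $H^1(\O;D_u)$ and, by the first paragraph, both sides of \eqref{eq:wse} are $H^1$-continuous in $w$, the identity passes to all $w\in H^1(\O;D_u)$, so $(\phi,u)$ solves \eqref{eq:wse}.

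For the reverse inclusion let $(\phi,u)$ solve \eqref{eq:wse}. Its first equation coincides with that of \eqref{eq:ws}, so the proof of Lemma~\ref{thm:max} applies verbatim and once more gives $\lceil\zp\rceil=\zp$. Restricting the second equation of \eqref{eq:wse} to $w\in\W_1^\infty(\O;D_u)$ and running the computation of the previous paragraph backwards — using again that $\zp w\in H^1(\O;D_\phi)$ is a legitimate test function in the first equation — recovers the second equation of \eqref{eq:ws} for every $w\in\W_1^\infty(\O;D_u)$, which is exactly what \eqref{eq:ws} requires. Hence the solution sets of \eqref{eq:ws} and \eqref{eq:wse} coincide.

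The only points that are not completely routine are the density of $\W_1^\infty(\O;D_u)$ in $H^1(\O;D_u)$, which I would justify from the Lipschitz regularity of $\p D_u$, and the $H^1$-continuity in $w$ of the right-hand side of \eqref{eq:wse}; among its terms the one to watch is $\la\sigma(u)\nabla g_\phi\cdot\nabla\phi,w\ra$, which is handled by a Hölder estimate resting on the hypothesis $g_\phi\in W_1^3(\oO)$ and the Sobolev embedding into $L^6$.
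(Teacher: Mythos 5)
Your proposal is correct and takes essentially the same route as the paper: where the paper cites Lemma 1 of \cite{HRS93} for the identity $\la \sigma(u)\nabla\phi\cdot\nabla\phi, w\ra = -\la \sigma(u)\,\zp\,\nabla\phi,\nabla w\ra + \la \sigma(u)\nabla g_\phi\cdot\nabla\phi, w\ra$, you derive it directly by testing the first equation with $\zp w$ for $w\in \W_1^\infty(\O;D_u)$. Beyond that, both arguments rest on the same two ingredients — the maximum principle of Lemma~\ref{thm:max} to justify $\lceil\zp\rceil=\zp$, and the density of $\W_1^\infty(\O;D_u)$ in $H^1(\O;D_u)$ together with the finiteness and $H^1$-continuity of every term in \eqref{eq:wse} to pass to the larger test space — so no further comment is needed.
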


\begin{proof}
The identity
\[
\la \sigma(u) \nabla \phi \cdot \nabla \phi, w \ra = - \la \sigma(u) \, \zp \, \nabla \phi, \nabla w \ra + \la \sigma(u) \nabla g_\phi \cdot \nabla \phi, w \ra
\]
follows from Lemma 1 in \cite{HRS93}. The cut-off functional may be used because of Lemma \ref{thm:max} above. The larger space of test functions does not change the set of weak solutions due to density and does not lead to infinite terms in~\eqref{eq:wse}. 
\end{proof}

We define the space $X := H^1(\O;D_\phi) \times H^1(\O;D_u)$ and the {\em affine} mapping
\begin{align*}
L : \; & X \to X^*, \; (\zvp,\zv) \mapsto \left( (\psi,w) \mapsto
 \begin{pmatrix}
 \la \sigma_\circ \nabla \varphi, \nabla \psi \ra & & \\
 \la \nabla v, \nabla w \ra & + & \la \kappa \, v, w \ra_{R_u}
 \end{pmatrix} \right)
\end{align*}
and the nonlinear mapping
\begin{align*}
N : X \to X^*, (\zvp,\zv) \mapsto \left( (\psi,w) \mapsto  \begin{pmatrix} \la (\sigma(v) - \sigma_\circ) \nabla \varphi, \nabla \psi \ra\\ \la \sigma(v) \, \lceil \zvp \rceil \nabla \varphi, \nabla w \ra - \la \sigma(v) \nabla g_\phi \cdot \nabla \varphi, w \ra \end{pmatrix} \right)
\end{align*}
and the functional
\[
b : X \to \R, \; (\psi,w) \mapsto \begin{pmatrix}
 0\\
 \la \hu, w \ra_{R_u}
 \end{pmatrix}
\]
where $\varphi = g_\phi + \zvp$ and $v = g_u + \zv$. Then equation \eqref{eq:wse} is in operator form
\begin{align} \label{eq:wso}
L x + N x = b
\end{align}
with $x = (\zp, \zu) \in X$.

For $(\zvp,\zv), (\psi,w) \in X$ one has
\begin{align} \label{eq:N_bounded} \hspace*{-3mm} \left. \begin{array}{rl}
& \, N( (\zvp,\zv), (\psi, w))\\[1mm]
\le & \, \sigma^\circ \cdot \bigl( \| \nabla \zvp \|_{L^2(\O)} + \| \nabla g_\phi \|_{L^2(\O)} \bigr)\\[1mm]
& \, \phantom{\sigma^\circ} \cdot \bigl( \| \nabla \psi \|_{L^2(\O)} + (g^\circ - g_\circ) \| \nabla w \|_{L^2(\O)} + \| \nabla g_\phi \|_{L^3(\O)} \| w \|_{L^6(\O)} \bigr)\\[1mm]
\lesssim & \, \bigl( 1 + \| \nabla \zvp \|_{L^2(\O, \R^3)} \bigr) \, \| (\psi,w) \|_{X},
\end{array} \; \right\} \end{align}
where we use $\|(\cdot,\cdot)\|_X$ to denote the natural norm in the product space $X$, in this case
\[
\|(\psi,w)\|^2_X=\|\psi\|^2_{H^1(\O)}+\|w\|^2_{H^1(\O)}.
\]

Throughout the text we adopt the notational convention that for a function~$\flat$ one understands $\tilde{\flat} = \flat - g_\phi$ if $\flat$ is a Greek letter and $\tilde{\flat} = \flat - g_u$ if $\flat$ is a Latin letter. We call $H^1(\O;D_\phi)$ the first and $H^1(\O;D_u)$ the second component of $X$. In this spirit we also refer, for example, to $\la \sigma_\circ \nabla \varphi, \nabla \psi \ra$ as the first component of $L$. Furthermore, we distinguish between $\phi$, which is a solution, and $\varphi$, which is a generic trial function.

% ------------------------------------------------------------------------------------------------------------------------
\section{Existence and Convergence of Galerkin Approximations}
% ------------------------------------------------------------------------------------------------------------------------

Consider a hierarchical family of subspaces $\{ X_n \}_{n \in \N} = \{ P_n \times U_n \}_{n \in \N}$ whose union is dense in $X$. A Galerkin solution $x_n \in X_n$ of \eqref{eq:wso} is a solution of
\begin{align} \label{eq:wsg}
\la L x_n + N x_n, y \ra = \la b, y \ra, \qquad \forall \, y \in X_n.
\end{align}
Lemma \ref{thm:demi} examines continuity properties of $L$ and $N$.

\begin{lemma} \label{thm:demi}
Let $\{ y_n \}_n = \{ (\zvp_n, \zv_n) \}_n$ be a sequence in $X$ and $y = (\zvp, \zv) \in X$ such that $\zvp_n \to \zvp, \zv_n \wc \zv$ as $n \to \infty$. Then $L y_n \wc L y$ weakly and $N y_n \to N y$ strongly in $X^*$.
\end{lemma}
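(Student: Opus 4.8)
The plan is to prove the two assertions separately, exploiting that $L$ is affine with a bounded linear part and that $N$ involves only compact terms in the second component and a continuous (non-compact) term in the first.

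\textbf{Weak convergence of $L y_n$.} Since $L$ is affine with linear part given by $(\zvp,\zv)\mapsto(\la\sigma_\circ\nabla\zvp,\nabla\,\cdot\,\ra,\ \la\nabla\zv,\nabla\,\cdot\,\ra+\la\kappa\zv,\,\cdot\,\ra_{R_u})$ and a constant part depending only on the lifted data $g_\phi,g_u$, it suffices to show that $(\zvp_n,\zv_n)\wc(\zvp,\zv)$ in $X$ implies $L(\zvp_n,\zv_n)\wc L(\zvp,\zv)$ in $X^*$. This is immediate: for fixed $(\psi,w)\in X$ the map $(\zvp,\zv)\mapsto\la L(\zvp,\zv),(\psi,w)\ra$ is a bounded \emph{linear} functional on $X$ plus a constant, hence weakly continuous. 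Note we only need $\zv_n\wc\zv$ here (strong convergence of $\zvp_n$ is not needed for this part), and in fact $\zvp_n\to\zvp$ in $H^1$ certainly implies $\zvp_n\wc\zvp$.

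\textbf{Strong convergence of $N y_n$.} This is the substantive part. Write $N=(N_1,N_2)$ for the two components. I would estimate $\|N y_n - N y\|_{X^*}$ by testing against $(\psi,w)$ with $\|(\psi,w)\|_X\le1$ and bounding each difference. For the first component,
\[
\la N_1 y_n - N_1 y,\psi\ra = \la(\sigma(v_n)-\sigma_\circ)\nabla\zvp_n - (\sigma(v)-\sigma_\circ)\nabla\zvp,\nabla\psi\ra,
\]
which I split as $\la(\sigma(v_n)-\sigma_\circ)(\nabla\zvp_n-\nabla\zvp),\nabla\psi\ra + \la(\sigma(v_n)-\sigma(v))\nabla\zvp,\nabla\psi\ra$. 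The first bracket is controlled by $\sigma^\circ\|\nabla\zvp_n-\nabla\zvp\|_{L^2}\to0$ using $\zvp_n\to\zvp$ strongly in $H^1$; this is exactly why strong convergence of the potential component is hypothesised. For the second bracket I use $0<\sigma_\circ\le\sigma\le\sigma^\circ$, $\sigma\in C^1$, and the compact embedding $H^1(\O)\hookrightarrow L^q(\O)$ for $q<6$: from $\zv_n\wc\zv$ in $H^1$ we get $v_n\to v$ in $L^q(\O)$ and a.e.\ along a subsequence, so by continuity and boundedness of $\sigma$ plus dominated convergence $\sigma(v_n)-\sigma(v)\to0$ in every $L^q(\O)$, $q<\infty$; pairing with the fixed $L^2$ function $\nabla\zvp$ via Hölder (e.g.\ $\sigma(v_n)-\sigma(v)$ in $L^q$, $\nabla\zvp$ in $L^2$, $\nabla\psi$ in $L^2$ with $q$ chosen appropriately, or simply noting $\nabla\zvp\,\nabla\psi\in L^1$ and $\sigma(v_n)-\sigma(v)\to0$ in $L^\infty$ weak-$*$ would suffice) gives convergence to $0$ uniformly in $\|\psi\|\le1$. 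Since the limit is independent of the subsequence, the whole sequence converges.

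\textbf{The second component and the cut-off.} For $N_2$ I must handle the three terms $\la\sigma(v)\lceil\zvp\rceil\nabla\varphi,\nabla w\ra$, $\la\sigma(v)\nabla g_\phi\cdot\nabla\varphi,w\ra$. Here $\varphi=g_\phi+\zvp$, so $\nabla\varphi_n\to\nabla\varphi$ in $L^2$. The key points are: (i) the cut-off functional $\lceil\cdot\rceil$ is (pointwise) Lipschitz with constant $1$, so $\|\lceil\zvp_n\rceil-\lceil\zvp\rceil\|_{L^2}\le\|\zvp_n-\zvp\|_{L^2}\to0$, and moreover $\lceil\zvp_n\rceil$ is uniformly bounded in $L^\infty$ by $g^\circ-g_\circ$; (ii) $\sigma(v_n)\to\sigma(v)$ in all $L^q$, $q<\infty$, as above. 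The main obstacle, and the only genuinely delicate estimate, is the product $\sigma(v_n)\lceil\zvp_n\rceil\nabla\varphi_n$: it is trilinear-type with two factors converging only in $L^2$ (namely $\nabla\varphi_n$ and, potentially, $w$) — but since $\lceil\zvp_n\rceil$ is bounded in $L^\infty$ and $\sigma(v_n)$ in $L^\infty$, and $\nabla\varphi_n\to\nabla\varphi$ in $L^2$ while $w\in L^6\hookrightarrow L^2$, a telescoping split
\[
\sigma(v_n)\lceil\zvp_n\rceil\nabla\varphi_n-\sigma(v)\lceil\zvp\rceil\nabla\varphi = (\sigma(v_n)-\sigma(v))\lceil\zvp_n\rceil\nabla\varphi_n + \sigma(v)(\lceil\zvp_n\rceil-\lceil\zvp\rceil)\nabla\varphi_n + \sigma(v)\lceil\zvp\rceil(\nabla\varphi_n-\nabla\varphi)
\]
reduces everything to: $L^q$-convergence of $\sigma(v_n)-\sigma(v)$ (with $q$ large) times bounded $\times$ $L^2$-bounded $\times$ $L^2$-bounded $w$; $L^2$-convergence of $\lceil\zvp_n\rceil-\lceil\zvp\rceil$ times $L^\infty$-bounded $\times$ $L^2$-bounded $\times$ $L^2$-bounded; and $L^2$-convergence of $\nabla\varphi_n-\nabla\varphi$ times bounded factors. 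Each term is then estimated by Hölder with exponents $(q,2,\cdot,\cdot)$ chosen so that the $w$-factor sits in $L^6$ and the constant absorbs $\|\nabla\zvp_n\|_{L^2}$, which is bounded by weak convergence. The term $\la\sigma(v)\nabla g_\phi\cdot\nabla\varphi,w\ra$ is handled identically using $\nabla g_\phi\in L^3(\O)$ (from $g_\phi\in W_1^3$) so that $\nabla g_\phi\cdot\nabla\varphi_n\in L^{6/5}$ pairs with $w\in L^6$. Passing to a subsequence for the a.e.\ argument and then observing the limit is subsequence-independent upgrades convergence to the full sequence, completing the proof that $N y_n\to N y$ strongly in $X^*$.
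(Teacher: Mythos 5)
Your overall strategy mirrors the paper's: compactness of $H^1(\O)\hookrightarrow L^2(\O)$ to extract a.e.\ convergence of $v_n$ along subsequences, boundedness of $\sigma$ and of the cut-off, the hypothesised strong convergence of $\nabla\zvp_n$, and the subsequence-uniqueness upgrade; the weak continuity of the affine map $L$ is also fine. However, the way you close the key estimates does not work as written. Whenever the factor $\sigma(v_n)-\sigma(v)$ (or $\lceil\zvp_n\rceil-\lceil\zvp\rceil$) multiplies a function that you only control in $L^2$ --- namely $\nabla\zvp$ in the first component, or $\nabla\varphi_n$ in the cut-off term --- and the product is then paired against a test gradient $\nabla\psi$ or $\nabla w\in L^2$, there are no admissible H\"older exponents: you would need $1/q+1/2+1/2\le 1$, i.e.\ $q=\infty$, while you only have $L^q$-convergence of $\sigma(v_n)-\sigma(v)$ for $q<\infty$. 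The parenthetical alternative is insufficient as well: weak-$*$ convergence of $\sigma(v_n)$ in $L^\infty$ gives $\la(\sigma(v_n)-\sigma(v))\nabla\zvp,\nabla\psi\ra\to 0$ for each \emph{fixed} $\psi$, but not uniformly over the unit ball of $X$, which is what convergence in the $X^*$-norm demands (an oscillating $\sigma(v_n)-\sigma(v)$ can tend to $0$ weak-$*$ while $\|(\sigma(v_n)-\sigma(v))\nabla\zvp\|_{L^2}$ does not).

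The repair is available inside your own set-up and is precisely the paper's mechanism. After using $\nabla\varphi_n\to\nabla\varphi$ in $L^2$ to freeze the gradient factor, what remains to show is $\|(\sigma(v_n)-\sigma(v))\nabla\varphi\|_{L^2}\to 0$ and $\|(\lceil\zvp_n\rceil-\lceil\zvp\rceil)\nabla\varphi\|_{L^2}\to 0$; both follow from pointwise a.e.\ convergence along subsequences, the uniform bounds on $\sigma$ and on the cut-off, and dominated convergence with the fixed majorant $C\,|\nabla\varphi|\in L^2$, after which your subsequence-independence argument gives convergence of the whole sequence. The paper does this in one stroke by applying a generalized dominated convergence theorem to the product $\sigma(v_{n(k)})\nabla\varphi_{n(k)}$, dominated by the $L^1$-convergent sequence $(\sigma^\circ)^2|\nabla\varphi_{n(k)}|^2$, to conclude $\sigma(v_n)\nabla\varphi_n\to\sigma(v)\nabla\varphi$ strongly in $L^2(\O,\R^3)$, from which all terms of $N$ follow. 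Your H\"older pairing is fine only for the terms containing $\nabla g_\phi$, since $\nabla g_\phi\in L^3(\O)$ permits $q=6$. With the mixed terms closed by the dominated-convergence route, your telescoping argument is correct and essentially coincides with the paper's proof.
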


\begin{proof}
Suppose there is a subsequence $\{ v_{n(k)} \}_k$ and an $\eps > 0$ such that
\begin{align} \label{eq:pwl}
\| \sigma(v_{n(k)}) \nabla \varphi_{n(k)} - \sigma(v) \nabla \varphi \|_{L^2(\O, \R^3)} > \eps \qquad \forall k \in \N.
\end{align}
The compactness of the embedding $H^1(\O) \hookrightarrow L^2(\O)$ and a corollary of the Riesz-Fischer theorem \cite[p.~161]{JJ99} imply that there is a subsequence, also denoted $\{ v_{n(k)} \}_k$, which converges pointwise almost everywhere. By possibly passing to another subsequence of indices we may also assume that $\{ \nabla \varphi_{n(k)} \}_k$ converges pointwise almost everywhere. The sequence
\[
\{ \sigma(v_{n(k)})^2\; \nabla \varphi_{n(k)} \cdot \nabla \varphi_{n(k)} \}_k
\]
is bounded in each component by $(\sigma^\circ)^2 |\nabla \varphi_{n(k)}|^2$. From the dominated convergence theorem, in the form of (Royden, p.~270), it follows that the sequence $\{ \sigma(v_{n(k)}) \nabla \varphi_{n(k)} \}_k$ converges strongly in $L^2(\O, \R^3)$. Observe that almost everywhere the poinwise limit of $\{ \sigma(v_{n(k)}) \nabla \varphi_{n(k)} \}_k$ is $\sigma(v) \nabla \varphi$, contradicting \eqref{eq:pwl}. Therefore $L y_n + N y_n$ converges, indeed strongly, in the first component. It also follows that the terms
\begin{align*}
\sigma(v_n) \, \lceil \zvp_n \rceil \nabla \varphi_n & \in L^2(\O, \R^3),\\
\sigma(v_n) \nabla g_\phi \cdot \nabla \varphi_n & \in L^2(\O),\\
\kappa v_n & \in L^2(\pO)
\end{align*}
converge strongly as $n \to \infty$. Hence $\{ L y_n \}_n$ converges weakly and $\{ N y_n \}_n$ strongly to $L y$ and $N y$ in $X^*$, respectively:
\[
\lim_n \sup_{z \neq 0} \frac{\la N y_n - N y, z\ra}{\| z \|_X} = 0, \qquad
\forall z \in X^* : \lim_n \la L y_n - L y, z\ra = 0,
\]
completing the proof. 
\end{proof}

The following lemma establishes a property of $L + N$ which is a variation of condition \Sz; a concept introduced by Browder, see \cite{B68} or \cite[IIB, p.583]{Zeidler}.

\begin{lemma} \label{thm:Sz}
Let $y_n = \{ (\zvp_n, \zv_n) \}_n$ be a sequence in $X$ and $y = (\zvp, \zv) \in X$ such that
\begin{align}
y_n & \wc y, \label{eq:w_conv}\\
L y_n + N y_n & \wc b, \label{eq:rhs_conv}\\
\lim_n \la L y_n + N y_n, (\zvp_n, 0) \ra & = \la b, (\zvp, 0) \ra, \label{eq:prod_convp}\\
\lim_n \la L y_n + N y_n, (0, \zv_n) \ra & = \la b, (0,\zv) \ra. \label{eq:prod_convu}
\end{align}
Then $y_n \to y$ strongly.
\end{lemma}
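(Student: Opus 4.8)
The plan is to establish strong convergence of the two components of $y_n = (\zvp_n, \zv_n)$ one after the other: first $\zvp_n \to \zvp$ directly, exploiting the uniform lower bound $\sigma \ge \sigma_\circ$, and then $\zv_n \to \zv$ with the help of Lemma~\ref{thm:demi}, which by that point will have become applicable. Write $\varphi_n = g_\phi + \zvp_n$, $v_n = g_u + \zv_n$ and $\varphi = g_\phi + \zvp$, $v = g_u + \zv$. From $y_n \wc y$ and the compact embedding $H^1(\O) \hookrightarrow L^2(\O)$ we get $v_n \to v$ strongly in $L^2(\O)$; combined with the continuity and boundedness of $\sigma$, the same subsequence-and-dominated-convergence argument as in the proof of Lemma~\ref{thm:demi} shows $\sigma(v_n)\,\zeta \to \sigma(v)\,\zeta$ strongly in $L^2(\O)$ for every fixed $\zeta$, in particular for $\zeta = \nabla g_\phi$ (using $\nabla g_\phi \in L^3(\O) \subset L^2(\O)$) and for $\zeta = \nabla\zvp$.

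For the first component, observe that the first entry of $\la L y_n + N y_n, (\psi, 0) \ra$ is exactly $\la \sigma(v_n) \nabla \varphi_n, \nabla \psi \ra$. Hence \eqref{eq:prod_convp} reads $\la \sigma(v_n) \nabla \varphi_n, \nabla \zvp_n \ra \to 0$, while \eqref{eq:rhs_conv} tested with $(\zvp, 0) \in X$ gives $\la \sigma(v_n) \nabla \varphi_n, \nabla \zvp \ra \to 0$, so $\la \sigma(v_n) \nabla \varphi_n, \nabla(\zvp_n - \zvp) \ra \to 0$. Since $\nabla(\zvp_n - \zvp) = \nabla \varphi_n - \nabla g_\phi - \nabla \zvp$ and $\sigma(v_n) \ge \sigma_\circ$,
\[
\sigma_\circ \, \| \nabla(\zvp_n - \zvp) \|_{L^2(\O,\R^3)}^2 \le \la \sigma(v_n) \nabla \varphi_n, \nabla(\zvp_n - \zvp) \ra - \la \sigma(v_n) \nabla g_\phi, \nabla(\zvp_n - \zvp) \ra - \la \sigma(v_n) \nabla \zvp, \nabla(\zvp_n - \zvp) \ra .
\]
The first term on the right tends to $0$ by the above; the remaining two are pairings of the strongly convergent sequences $\sigma(v_n) \nabla g_\phi$ and $\sigma(v_n) \nabla \zvp$ with the weak null sequence $\nabla(\zvp_n - \zvp) \wc 0$ in $L^2(\O,\R^3)$, hence also tend to $0$. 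Thus $\nabla(\zvp_n - \zvp) \to 0$ in $L^2$, and the first Poincar\'e--Friedrichs inequality in \eqref{eq:pfi} upgrades this to $\zvp_n \to \zvp$ strongly in $H^1(\O)$.

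With the first component now strongly convergent and the second still only weakly convergent, Lemma~\ref{thm:demi} applies: $L y_n \wc L y$ and $N y_n \to N y$ strongly in $X^*$. Comparing with \eqref{eq:rhs_conv} and using uniqueness of weak limits in $X^*$ gives $L y + N y = b$; testing the second component of this identity with $(0, \zv)$ yields $\la \nabla v, \nabla \zv \ra + \la \kappa \, v, \zv \ra_{R_u} = \la \hu, \zv \ra_{R_u} - \la N y, (0, \zv) \ra$. On the other hand \eqref{eq:prod_convu} states $\la L y_n + N y_n, (0, \zv_n) \ra \to \la \hu, \zv \ra_{R_u}$, and since $N y_n \to N y$ strongly in $X^*$ while $(0, \zv_n) \wc (0, \zv)$ in $X$ we have $\la N y_n, (0, \zv_n) \ra \to \la N y, (0, \zv) \ra$, so the second component of $L$ satisfies $\la \nabla v_n, \nabla \zv_n \ra + \la \kappa \, v_n, \zv_n \ra_{R_u} \to \la \hu, \zv \ra_{R_u} - \la N y, (0, \zv) \ra$. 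Abbreviating $a(p,q) := \la \nabla p, \nabla q \ra + \la \kappa \, p, q \ra_{R_u}$, the last two facts together with $v_n - g_u = \zv_n$, $v - g_u = \zv$ give
\[
a(\zv_n, \zv_n) = a(v_n, \zv_n) - a(g_u, \zv_n) \ \longrightarrow \ a(v, \zv) - a(g_u, \zv) = a(\zv, \zv),
\]
using $a(g_u, \zv_n) \to a(g_u, \zv)$ from weak convergence. Since $a$ is symmetric and bounded, also $a(\zv_n, \zv) \to a(\zv,\zv)$ and $a(\zv, \zv_n) \to a(\zv,\zv)$, whence $a(\zv_n - \zv, \zv_n - \zv) \to 0$. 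The second inequality in \eqref{eq:pfi} shows $a(w,w) \gtrsim \| w \|_{H^1(\O)}^2$ on $\W_1^2(\O; D_u)$, so $\zv_n \to \zv$ strongly in $H^1(\O)$, and together with the first part $y_n \to y$ strongly in $X$.

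The crux is the first component, where Lemma~\ref{thm:demi} is not yet available: the argument there must lean on the structural facts that the first equation is uniformly elliptic in $\varphi$ (because $\sigma(v_n) \ge \sigma_\circ$) and that the two ``energy'' hypotheses \eqref{eq:prod_convp} and \eqref{eq:rhs_conv} pin down $\la \sigma(v_n) \nabla \varphi_n, \nabla(\zvp_n - \zvp) \ra \to 0$, handling the nonlinearity $\sigma(v_n)$ only through its strong $L^2$-convergence against the fixed gradients $\nabla g_\phi$ and $\nabla\zvp$. Once $\zvp_n \to \zvp$ strongly, the second component is the routine coercive-form argument, the only nonroutine input being that Lemma~\ref{thm:demi} turns $N y_n$ into a strongly convergent sequence that therefore pairs continuously with the weakly convergent $(0, \zv_n)$.
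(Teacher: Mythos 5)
Your proof is correct and follows essentially the same route as the paper: strong $L^2$-convergence of $\sigma(v_n)$ against fixed gradients via the dominated-convergence argument of Lemma~\ref{thm:demi}, the hypotheses \eqref{eq:prod_convp} and \eqref{eq:rhs_conv} combined with $\sigma \ge \sigma_\circ$ and Poincar\'e--Friedrichs to obtain $\zvp_n \to \zvp$, then Lemma~\ref{thm:demi} together with \eqref{eq:prod_convu} and a coercivity/energy argument for the remaining component. The only cosmetic difference is that the paper carries out the final coercivity step on the whole product space via the weak continuity of $L$, whereas you treat the second component separately through the bilinear form $a$.
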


\begin{proof}
Adapting the argument of the proof of the previous lemma it follows analogously that $\{ \sigma(v_n) \nabla \psi \}_n$ converges strongly in $L^2(\O, \R^3)$. Using the strong convergence in $(*)$, one obtains
\begin{align*} \begin{array}{rcl}
0 & \le & \limsup_n \la \sigma(v_n) \nabla (\zvp - \zvp_n), \nabla (\zvp - \zvp_n) \ra\\[1mm]
& = & \limsup_n \bigl( \la \sigma(v_n) \nabla \zvp\phantom{_n}, \nabla  \zvp \ra - 2 \la \sigma(v_n) \nabla \zvp_n, \nabla \zvp \ra + \la \sigma(v_n) \nabla \zvp_n, \nabla \zvp_n \ra \bigr)\\
& \stackrel{(*)}{=} & \limsup_n \bigl(\la \sigma(v_n) \nabla \zvp_n, \nabla \zvp \ra - 2 \la \sigma(v_n) \nabla \zvp_n, \nabla \zvp \ra + \la \sigma(v_n) \nabla \zvp_n, \nabla \zvp_n \ra \bigr)\\
& \stackrel{\eqref{eq:prod_convp}}{=} & \limsup_n \bigl(- \la \sigma(v_n) \nabla \varphi_n, \nabla \zvp \ra + \la b, (\zvp,0) \ra + \la \sigma(v_n) \nabla g_\phi, \nabla ( \zvp_n - \zvp ) \ra  \bigr)\\
& \stackrel{\eqref{eq:rhs_conv}}{=} & - \la b, (\zvp,0) \ra + \la b, (\zvp,0) \ra = 0.
\end{array} \end{align*}
Therefore $\varphi_n$ converges strongly and Lemma \ref{thm:demi} becomes available. Hence $\la N y_n, y_n \ra \to \la N y, y \ra$ and
\begin{align*} \begin{array}{ccll}
\la b, y \ra &  \stackrel{\eqref{eq:rhs_conv}}{=} & \lim_n \la L y_n + N y_n, y \ra & = \lim_n \la L y_n, y\phantom{_n} \ra + \la N y, y \ra,\\
\la b, y \ra & \stackrel{\eqref{eq:prod_convp}, \eqref{eq:prod_convu}}{=} & \lim_n \la L y_n + N y_n, y_n \ra & = \lim_n \la L y_n, y_n \ra + \la N y, y \ra.
\end{array} \end{align*}
The weak continuity of $L$ implies that $\lim_n \la L y_n, y \ra = \la L y, y \ra$, cf.~\cite[p.~422]{DuSch88}. Therefore
\begin{align*}
0 = & \lim_n \la L y_n, y_n - y \ra = \lim_n \la L y_n, y \ra - 2 \la L y_n, y \ra + \la L y_n, y_n \ra\\
= & \lim_n \la L y, y \ra - 2 \la L y_n, y \ra + \la L y_n, y_n \ra = \lim_n \la L (y - y_n), y - y_n \ra.
\end{align*}
It follows from the coercivity of the linear part of $L$ that $y_n \to y$ in~$X$. 
\end{proof}

Let $T_n: X_n\rightarrow X_n$ be defined by $y_n=T \hat{y}_n$, where $y_n = (\zvp_n, \zv_n) \in X_n$ is given as the solution to
\begin{align} \label{eq:T_def}
\la L y_n + N (\zvp_n, \hat{v}_n), (\psi, w) \ra = \la b, (\psi, w) \ra, \qquad (\psi, w) \in X_n
\end{align}
with $\hat{y}_n = (\hat{\varphi}_n, \hat{v}_n)$. Algorithmically an iteration with $T$ corresponds to a method with the primary variable $\hat{v}_n$ and the dummy variable $\hat{\varphi}_n$ as $\hat{\varphi}_n$ does not explicitly appear in the next step of the iteration.

\begin{lemma} \label{thm:bound}
There exists a radius $r$, independent of $n$, such that the range of $T_n$ belongs to
\[
B_r := \{ y \in X : \| y \|_X \le r \}
\]
for all $n \in \N$.
\end{lemma}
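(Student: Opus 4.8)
The plan is to obtain an $n$-independent bound on $y_n = T_n\hat y_n = (\zvp_n, \zv_n)$ directly from the defining equation \eqref{eq:T_def}, by testing with $(\psi,w) = y_n$ itself and exploiting the coercivity built into $L$ together with the structural bound \eqref{eq:N_bounded} on $N$ and the cut-off. The key point is that the cut-off functional satisfies $g_\circ - g_\phi \le \lceil\zvp_n\rceil \le g^\circ - g_\phi$ \emph{pointwise} and hence $\|\lceil\zvp_n\rceil\|_{L^\infty(\O)} \le g^\circ - g_\circ$ regardless of how large $\zvp_n$ is; this is exactly what decouples the estimate from any discrete maximum principle. So the term $\la\sigma(v_n)\lceil\zvp_n\rceil\nabla\varphi_n, \nabla w\ra$ appearing in $N$, evaluated at $w = \zv_n$, is controlled by $\sigma^\circ (g^\circ - g_\circ)\|\nabla\varphi_n\|_{L^2}\|\nabla\zv_n\|_{L^2}$.

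First I would write out $\la Ly_n + N(\zvp_n,\hat v_n), y_n\ra = \la b, y_n\ra$ componentwise. The first component gives $\sigma_\circ\|\nabla\varphi_n\|_{L^2}^2 \le -\la(\sigma(\hat v_n)-\sigma_\circ)\nabla\varphi_n,\nabla\zvp_n\ra$; writing $\varphi_n = g_\phi + \zvp_n$ and using $\sigma_\circ \le \sigma(\hat v_n)\le\sigma^\circ$ one extracts $\|\nabla\zvp_n\|_{L^2} \le C\|\nabla g_\phi\|_{L^2}$, hence a bound on $\|\nabla\varphi_n\|_{L^2}$ depending only on the data — crucially \emph{independent of $\hat v_n$}, since $\sigma$ is globally bounded. (Here one should be slightly careful: the first line of $N$ in \eqref{eq:T_def} uses $\hat v_n$, not $v_n$, but boundedness of $\sigma$ makes this irrelevant.) Then I would turn to the second component: $\|\nabla\zv_n\|_{L^2}^2 + \|\sqrt\kappa\,v_n\|_{L^2(R_u)}^2$ modulo the $g_u$ shift is bounded, via Cauchy--Schwarz, the $L^\infty$ bound on the cut-off, the Sobolev embedding $H^1(\O)\hookrightarrow L^6(\O)$ for the $\la\sigma(v_n)\nabla g_\phi\cdot\nabla\varphi_n, w\ra$ term (using $g_\phi \in W_1^3$), the trace bound on $\la\hu,\zv_n\ra_{R_u}$, and the already-established bound on $\|\nabla\varphi_n\|_{L^2}$. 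Absorbing the resulting $\|\nabla\zv_n\|_{L^2}$ (and trace) terms on the left using Young's inequality, and finally invoking the Poincaré--Friedrichs inequalities \eqref{eq:pfi} to upgrade the gradient/trace control to full $H^1$ control of $\zvp_n$ and $\zv_n$, yields $\|y_n\|_X \le r$ with $r$ depending only on $\sigma_\circ,\sigma^\circ, g^\circ - g_\circ, \|g_\phi\|_{W_1^3}, \|g_u\|_{\W^2_{1/2}}, \|\hu\|_{\W^2_{-1/2}}$ and the Poincaré constants — none of which see $n$.

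The one genuine subtlety, and what I expect to be the main obstacle, is handling the quadratic-in-$\varphi_n$ structure of the heat-equation right-hand side without a circular estimate. The term $\la\sigma(v_n)\lceil\zvp_n\rceil\nabla\varphi_n,\nabla\zv_n\ra$ is, up to the bounded factors $\sigma(v_n)$ and $\lceil\zvp_n\rceil$, bilinear in $(\nabla\varphi_n,\nabla\zv_n)$, so its contribution is $\lesssim \|\nabla\varphi_n\|_{L^2}\|\nabla\zv_n\|_{L^2}$; this is linear, not quadratic, in $\|\nabla\zv_n\|_{L^2}$ and can therefore be absorbed by Young's inequality — but only because the potential estimate came first and is self-contained. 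The logical order (bound $\zvp_n$ from equation one, \emph{then} bound $\zv_n$ from equation two) is what makes this work, and it mirrors the decoupled iteration structure of $T_n$ noted after \eqref{eq:T_def}. I would also double-check that when $D_u = \emptyset$ the second Poincaré--Friedrichs inequality in \eqref{eq:pfi} is still assumed to hold, so that the argument is uniform in that case as well.
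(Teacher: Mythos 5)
Your proposal is correct and follows essentially the same route as the paper's proof: test the first component of \eqref{eq:T_def} with $\psi=\zvp_n$ to get $\|\zvp_n\|_{H^1(\O)}\lesssim\|\nabla g_\phi\|_{L^2(\O,\R^3)}$ independently of $\hat v_n$ (via \eqref{eq:pfi}, $\sigma_\circ\le\sigma\le\sigma^\circ$ and Cauchy--Schwarz), and then bound $\zv_n$ from the second component using the structural estimate \eqref{eq:N_bounded} (which already encodes the $L^\infty$ bound on the cut-off and the $W_1^3$ control of $g_\phi$), coercivity and absorption. The only nitpick is the intermediate display $\sigma_\circ\|\nabla\varphi_n\|_{L^2}^2\le-\la(\sigma(\hat v_n)-\sigma_\circ)\nabla\varphi_n,\nabla\zvp_n\ra$, which is not literally what testing with $\zvp_n$ gives; but your subsequent extraction of $\|\nabla\zvp_n\|_{L^2}\le C\|\nabla g_\phi\|_{L^2}$ is the correct and intended estimate.
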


\begin{proof}
Let $y_n = (\zvp_n, \zv_n) = T_n \hat{y} = T_n (\hat{\varphi}_n, \hat{v}_n)$. The first component of \eqref{eq:T_def} gives, with $\psi=\zvp_n$, the identity $0 = \la \sigma(\hat{v}_n) \nabla (\zvp_n + g_\phi), \nabla \zvp_n \ra$. Thus, with the above Poincar\'{e}-Friedrichs inequality for $H^1(\O;D_\phi)$:
\begin{equation*}
\| \zvp_n \|^2_{H^1(\O)} \lesssim \la \sigma(\hat{v}_n) \nabla \zvp_n, \nabla \zvp_n \ra = - \la \sigma(\hat{v}_n) \nabla g_\phi, \nabla \zvp_n \ra.
\end{equation*}
The Cauchy-Schwarz inequality now gives $\| \zvp_n \|_{H^1(\O)} \lesssim \|\nabla g_\phi\|_{L^2(\O, \R^3)}$. Recall \eqref{eq:N_bounded} with $(\psi,w) = (\zvp_n, \zv_n)$ and $(\zvp, \zv) = (\zvp_n, \hat{v}_n)$. The Cauchy-Schwarz inequality, \eqref{eq:pfi} and the coercivity of the linear part of $L$ give the boundedness of $\zv_n$. 
\end{proof}

Observe that the fixed points of $T_n$ are exactly the Galerkin solutions in the sense of \eqref{eq:wsg}.

\begin{lemma}
The mapping $T_n$ has at least one fixed point $x_n$.
\end{lemma}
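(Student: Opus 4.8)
The plan is to apply a finite-dimensional fixed point theorem to the continuous self-map $T_n$ of the finite-dimensional space $X_n$. Specifically, I would use Brouwer's fixed point theorem, which requires a continuous map from a compact convex set into itself. Lemma \ref{thm:bound} already provides the crucial ingredient: the range of $T_n$ lies inside the closed ball $B_r$ for a radius $r$ independent of $n$. Restricting attention to the set $B_r \cap X_n$, which is compact (since $X_n$ is finite-dimensional) and convex, the map $T_n$ sends $B_r \cap X_n$ into $B_r \cap X_n$. Hence it suffices to establish that $T_n : X_n \to X_n$ is continuous.

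To prove continuity of $T_n$, suppose $\hat{y}^{(j)} = (\hat\varphi_n^{(j)}, \hat v_n^{(j)}) \to \hat y = (\hat\varphi_n, \hat v_n)$ in $X_n$ as $j \to \infty$, and set $y^{(j)} = T_n \hat y^{(j)}$. By Lemma \ref{thm:bound} the sequence $\{y^{(j)}\}_j$ is bounded in the finite-dimensional space $X_n$, so every subsequence has a convergent sub-subsequence; by a standard argument it is enough to show that every such limit equals $T_n \hat y$. So assume $y^{(j)} \to y^* = (\varphi_n^*, v_n^*)$ along a subsequence. Passing to the limit in the defining relation \eqref{eq:T_def},
\begin{align*}
\la L y^{(j)} + N(\tilde\varphi_n^{(j)}, \hat v_n^{(j)}), (\psi, w) \ra = \la b, (\psi, w) \ra \qquad \forall (\psi,w) \in X_n,
\end{align*}
one uses that on the fixed finite-dimensional space $X_n$ all the relevant maps are continuous: $L$ is affine and bounded, hence continuous, while the Nemytskii-type terms in $N$ depend continuously on their arguments because $\sigma \in C^1(\R)$ is bounded and $\lceil\cdot\rceil$ is Lipschitz, and on $X_n$ strong $H^1$-convergence of $\tilde\varphi_n^{(j)} \to \tilde\varphi_n^*$ and $\hat v_n^{(j)} \to \hat v_n$ gives (after passing to an a.e.-convergent subsequence and invoking dominated convergence, exactly as in the proof of Lemma \ref{thm:demi}) convergence of the integrals defining $N$. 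Therefore $y^*$ satisfies
\begin{align*}
\la L y^* + N(\tilde\varphi_n^*, \hat v_n), (\psi,w) \ra = \la b, (\psi,w) \ra \qquad \forall (\psi,w) \in X_n,
\end{align*}
which is precisely the equation characterizing $T_n \hat y$. Provided the solution of \eqref{eq:T_def} is unique for given right-hand data, we conclude $y^* = T_n \hat y$, so $T_n$ is continuous; applying Brouwer then yields a fixed point $x_n \in B_r \cap X_n$, and by the observation preceding the lemma this $x_n$ is a Galerkin solution.

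The main obstacle I anticipate is the \emph{well-definedness} of $T_n$ itself, i.e.\ that \eqref{eq:T_def} has a unique solution $y_n$ for each $\hat y_n$. Decoupling helps: the first component of \eqref{eq:T_def} is, for fixed $\hat v_n$, a linear coercive problem for $\tilde\varphi_n$ (coercivity and boundedness of $\sigma(\hat v_n)$ give existence and uniqueness by Lax--Milgram restricted to $P_n$); once $\tilde\varphi_n$ is determined, the second component becomes a linear coercive problem for $\tilde v_n$, again uniquely solvable. So $T_n$ is genuinely well defined and single-valued, and the uniqueness needed in the subsequence argument above is automatic. If one instead prefers to sidestep uniqueness of $T_n$, one could replace Brouwer by the Kakutani fixed point theorem for the (possibly set-valued) solution map, but the decoupling argument makes this unnecessary. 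A minor secondary point to handle carefully is that the dominated-convergence step requires extracting an a.e.-convergent subsequence of gradients, which is harmless here since we are already working along subsequences.
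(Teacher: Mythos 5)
Your proposal is correct and follows the same overall skeleton as the paper: restrict $T_n$ to the compact convex set $B_r \cap X_n$ using Lemma \ref{thm:bound}, prove continuity, and invoke Brouwer. Where you differ is in how continuity is obtained. The paper argues component-wise: it cites the First Lemma of Strang to get that the discrete solution $\zvp_n$ of the first (linear elliptic) equation depends continuously on the coefficient $\sigma(\hat v_n)$ in $L^\infty$, using equivalence of norms on the finite-dimensional $X_n$, and then obtains $\zv_n$ from the Galerkin restriction of the affine map $L^{-1}$ together with the sequential continuity of $N$ from Lemma \ref{thm:demi}. You instead run a compactness--uniqueness argument entirely inside $X_n$: boundedness of the outputs (again Lemma \ref{thm:bound}), extraction of a convergent subsequence, passage to the limit in \eqref{eq:T_def} via the dominated-convergence mechanism of Lemma \ref{thm:demi}, and identification of the limit through uniqueness of the decoupled linear problems (Lax--Milgram for the first component with coefficient $\sigma(\hat v_n)$, then for the coercive Robin problem in the second component). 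Your route has two small advantages: it makes the well-definedness of $T_n$ explicit, which the paper leaves implicit, and it avoids the $L^\infty$-norm equivalence on $X_n$ implicit in the Strang-lemma step (harmless for finite element spaces but not stated for general finite-dimensional $X_n$); the paper's route is shorter and more quantitative, since Strang's lemma gives an explicit continuity estimate rather than a subsequence argument. Your caveat about uniqueness is fully resolved by your own decoupling observation, so there is no gap.
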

\begin{proof}
We have that $T_n: B_r \cap X_n \rightarrow B_r \cap X_n$. The First Lemma of Strang (Braess, 2007, p.106) implies that the Galerkin solution of a linear elliptic equation changes continuously in the $H^1$-norm as the diffusion coefficient is varied in the $L^\infty$-norm. Therefore, looking at the first component in \eqref{eq:T_def}, $\zvp_n$ depends continuously on $\hat{v}_n$, taking the equivalence of norms in the finite-dimensional $X_n$ into account. With $\zvp_n$ determined, $\zv_n$ can be computed from $(\zvp_n, \zv_n) = L^{-1} (b - N (\zvp_n, \hat{v}_n))$. Lemma \ref{thm:demi} showed a sequential continuity of property of $L$ and $N$ which guarantees that the finite-dimensional Galerkin restrictions $X_n \to X_n$ are continuous. Equally the Galerkin restriction $X_n \to X_n$ of the affine mapping $L^{-1}$ is continuous. This means that $T_n$ is a continuous map $T_n: B_r \cap X_n \rightarrow B_r \cap X_n$, so Brouwer's fixed point theorem gives the existence of a fixed point~$x_n$. 
\end{proof}

It is a direct consequence that Galerkin solutions exist for all $n \in \N$ and (that at least one of them) are contained in $B_r$. The next theorem conceptually builds upon Proposition 27.4 in \cite[vol.~II B]{Zeidler} where Lemma \ref{thm:Sz} is replaced by \Sz.

\begin{theorem} \label{thm:exis}
There exists a subsequence of Galerkin solutions $\{ x_{n(k)} \}_k = \{ (\zvp_{n(k)}, \zv_{n(k)}) \}_k$ and an $x = (\zvp, \zv)$ in $X$ such that $x_{n(k)} \to x$ strongly in $X$ and $x$ solves \eqref{eq:wso}. If the solution $x$ of \eqref{eq:wso} is unique then the whole sequence converges.
\end{theorem}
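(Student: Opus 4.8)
The plan is to combine the a priori bound of Lemma~\ref{thm:bound} with the abstract (S)$_0$-type property of Lemma~\ref{thm:Sz}, following the scheme of Proposition~27.4 in \cite{Zeidler} but with the Browder condition replaced by the component-wise Lemma~\ref{thm:Sz}. First I would note that, by the preceding lemmas, for each $n$ there is a Galerkin solution $x_n = (\zvp_n, \zv_n) \in X_n$ with $\| x_n \|_X \le r$ and $r$ independent of $n$. Since $X$ is a Hilbert space, bounded sequences have weakly convergent subsequences, so there is a subsequence, still written $\{x_n\}$ (or $\{x_{n(k)}\}$), and an $x = (\zvp,\zv) \in X$ with $x_n \wc x$. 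This establishes \eqref{eq:w_conv}.

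Next I would verify the remaining hypotheses \eqref{eq:rhs_conv}--\eqref{eq:prod_convu} of Lemma~\ref{thm:Sz}. The key point is the density of $\bigcup_n X_n$ in $X$ together with the hierarchical structure $X_n \subseteq X_{n+1}$. Fix $y \in X$ and choose $y_m \in X_m$ with $y_m \to y$; for $n \ge m$ the Galerkin equation \eqref{eq:wsg} gives $\la L x_n + N x_n, y_m \ra = \la b, y_m\ra$. Using the boundedness of $\{L x_n + N x_n\}$ in $X^*$ (which follows from $\|x_n\|_X \le r$ and \eqref{eq:N_bounded} together with the boundedness of $L$ on bounded sets) and a standard diagonal/approximation argument, one passes to the limit to obtain $\la L x_n + N x_n, y\ra \to \la b, y\ra$ for every $y \in X$, i.e.\ \eqref{eq:rhs_conv}. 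For \eqref{eq:prod_convp} and \eqref{eq:prod_convu} one uses that $x_n$ itself is an admissible test function in \eqref{eq:wsg}, so $\la L x_n + N x_n, x_n\ra = \la b, x_n\ra \to \la b, x\ra$ by \eqref{eq:w_conv}; splitting $x_n = (\zvp_n,0) + (0,\zv_n)$ one would like the two pieces to converge separately. This is the one subtle point: the split of the limit relation into its two components. I would handle it by testing \eqref{eq:wsg} against $(\psi,0)$ and $(0,w)$ separately with $\psi \in P_m$, $w \in U_m$, obtaining in the limit that $L x + N x = b$ holds already before knowing strong convergence is false --- more carefully, I would first run the argument of Lemma~\ref{thm:Sz}'s proof only on the first component (which only needs \eqref{eq:prod_convp} and the first line of \eqref{eq:wse}) to deduce $\zvp_n \to \zvp$ strongly, and once that is in hand \eqref{eq:prod_convu} follows from \eqref{eq:rhs_conv} applied to $(0,\zv_n-\zv)$ plus the strong convergence of the first component feeding into the second equation via Lemma~\ref{thm:demi}.

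With all four hypotheses of Lemma~\ref{thm:Sz} verified, that lemma yields $x_n \to x$ strongly in $X$. Then Lemma~\ref{thm:demi} gives $L x_n \wc L x$ and $N x_n \to N x$ in $X^*$, so passing to the limit in \eqref{eq:wsg} (for fixed test function in some $X_m$, then by density for all of $X$) shows $\la L x + N x, y\ra = \la b, y\ra$ for all $y \in X$; hence $x$ solves \eqref{eq:wso}. Finally, for the uniqueness statement: if \eqref{eq:wso} has a unique solution $x$, then the standard subsequence argument applies --- every subsequence of $\{x_n\}$ has a further subsequence converging strongly to a solution of \eqref{eq:wso}, which must be $x$; since every subsequence has a sub-subsequence converging to the same limit $x$, the whole sequence converges to $x$.

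I expect the main obstacle to be the clean separation of the limit identity into its two components, i.e.\ establishing \eqref{eq:prod_convp} and \eqref{eq:prod_convu} individually rather than only their sum. The resolution is to exploit the triangular (decoupled-on-test-functions) structure of \eqref{eq:wse}: the first equation involves only $\psi$, so testing with $(\zvp_n,0)$ and using the Galerkin orthogonality already pins down the first component's energy limit, after which the second component follows. The remaining steps --- weak compactness, passing to the limit using density of $\bigcup_n X_n$, and the subsequence principle for uniqueness --- are routine given the groundwork laid in Lemmas~\ref{thm:demi}--\ref{thm:bound}.
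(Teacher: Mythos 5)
Your overall route is exactly the paper's: boundedness of the Galerkin solutions (Lemma \ref{thm:bound}) plus weak compactness gives a weakly convergent subsequence, \eqref{eq:rhs_conv} follows from boundedness of $L+N$ and density of $\bigcup_n X_n$, Lemma \ref{thm:Sz} upgrades to strong convergence, Lemma \ref{thm:demi} passes to the limit in the equation, and the accumulation-point argument gives convergence of the whole sequence under uniqueness. All of that matches the published proof.

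The one place you go astray is the point you yourself flag as subtle, namely \eqref{eq:prod_convp} and \eqref{eq:prod_convu}. There is no subtlety: since $X_n = P_n \times U_n$ is a product, both $(\zvp_n,0)$ and $(0,\zv_n)$ are themselves admissible test functions in \eqref{eq:wsg}, so
\[
\la L x_n + N x_n, (\zvp_n,0)\ra = \la b,(\zvp_n,0)\ra, \qquad \la L x_n + N x_n, (0,\zv_n)\ra = \la b,(0,\zv_n)\ra,
\]
and the right-hand sides converge to $\la b,(\zvp,0)\ra$ and $\la b,(0,\zv)\ra$ simply because $b$ is a fixed element of $X^*$ and the subsequence converges weakly componentwise. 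That is all the paper does; you gesture at this at the end ("testing with $(\zvp_n,0)$ \dots"), but the detour you actually propose does not work as written. You cannot obtain \eqref{eq:prod_convu} by "applying \eqref{eq:rhs_conv} to $(0,\zv_n-\zv)$": \eqref{eq:rhs_conv} is weak convergence in $X^*$, i.e.\ convergence against \emph{fixed} test functions, and pairing $L x_n + N x_n$ with a test function that varies with $n$ is exactly the duality product that condition \eqref{eq:prod_convu} is about; in particular the term $\la \nabla v_n, \nabla \zv_n\ra + \la \kappa v_n,\zv_n\ra_{R_u}$ is not controlled by weak convergence of $\zv_n$ alone --- handling it is the whole purpose of Lemma \ref{thm:Sz}, so invoking that lemma's mechanism to justify its own hypothesis is circular. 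Likewise, "running the first-component argument of Lemma \ref{thm:Sz}" presupposes \eqref{eq:prod_convp}, which you had not yet established at that stage. Replace the detour by the direct observation above and the proof is complete and coincides with the paper's.
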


\begin{proof}
It follows from \eqref{eq:wsg} that $\la (L+N) x_n, y \ra \to \la b , y \ra$ for all fixed $y \in X_m$, $m \in \N$. Observe that $L + N$ is a bounded operator, see \eqref{eq:N_bounded} for $N$. Thus with $\{ x_n \}_n$ also the sequence $\{ (L+N) x_n \}_n$ is bounded. Consequently, $(L+N) x_n \wc b$ in $X^*$ as $n \to \infty$, see Proposition 21.26 (c),(f) in \cite[vol.~II A]{Zeidler}. The sequence $\{ x_n \}_n$ is bounded in the reflexive Banach space $X$. Thus there exists an $x \in X$ and a subsequence $\{ x_{n(k)} \}_k$ with $x_{n(k)} \wc x$ as $k \to \infty$. It follows from \eqref{eq:wsg} that
\begin{align*}
\lim_k \la L x_{n(k)} + N x_{n(k)}, (\zvp_{n(k)}, 0) \ra & = \lim_k \la b, (\zvp_{n(k)}, 0) \ra = \la b, (\zvp, 0) \ra,\\
\lim_k \la L x_{n(k)} + N x_{n(k)}, (0, \zv_{n(k)}) \ra  & = \lim_k \la b, (0, \zv_{n(k)}) \ra  = \la b, (0,\zv) \ra.
\end{align*}
Therefore, it follows from Lemma \ref{thm:Sz} that $x_{n(k)} \to x$ as $k \to \infty$. The continuity established in Lemma \ref{thm:demi} yields $(L + N) x_{n(k)} \wc (L + N) x$ as $k \to \infty$ and therefore $(L + N) x = b$. If the exact solution is unique, then the Galerkin approximations can only have one accumulation point. 
\end{proof}

We remark that this convergence result applies, in particular, to conforming finite element methods.

% ------------------------------------------------------------------------------------------------------------------------
\section{Regularity}
% ------------------------------------------------------------------------------------------------------------------------

In this section we investigate how the regularity estimates for the Poisson problem with mixed boundary conditions, derived in \cite{MM07}, carry over to the Joule heating problem. These bounds are sharp in the Poisson setting. In general they are also sharp for the Joule problem---noting that equation \eqref{eq:sphi} takes the form of Poisson's equation when choosing a constant $\sigma$.

The underlying question is whether additional regularity can be gained if the type of the boundary condtions only changes at re-entrant corners:
\begin{quote}
{\bf (C)} \; $\Omega$ is a creased domain.
\end{quote}
For the full definition of creased domains we refer to \cite{MM07}; here we only highlight that in the setting of the Joule heating problem the key conditions are that $\O$ is a bounded Lipschitz domain, that $D_\phi$ and $D_u$ are open and non-empty and that $\partial D_\phi$ and $\partial D_u$ are not re-entrant, meaning that the angles between $D_\phi$ and $N_\phi$ as well as between $D_u$ and $R_u$ are strictly less than $\pi$.

Let $\m H_\eps \subset \R^2$ be the interior of the polygon with the vertices
\[
(0,0), \quad (\eps,0), \quad\left(1, \oh - \eps \right), \quad (1,1), \quad (1-\eps,1), \quad \left( 0, \oh + \eps \right).
\]

In the statement of the following theorem we let $D \in \{ D_\phi, D_u \}$ and $R \in \{ N_\phi, R_u \}$. Also $1/p + 1/p' = 1$ and $B^{p,q}_s$ denotes the scale of Besov spaces.

\begin{lemma}[\cite{MM07}] \label{thm:mitrea}
There exists an $\eps = \eps(\pO, D, R)$ in $(0,1/2)$ such that Poisson's equation is well-posed in the spaces
\begin{align} \label{eq:nlin} \left. \begin{array}{rlrl}
v & \in \W_{s+\frac{1}{p}}^p(\O), & \qquad \Delta v & \in \left( \W_{2 - s - \frac{1}{p}}^{p'}(\O; D) \right)^*,\\[3mm]
v|_D & = g \in B^{p,p}_s(D), & \qquad \p_\nu v|_R & = h \in B^{p,p}_{s-1}(R),
\end{array} \quad \right\} \end{align}
whenever $(s,1/p) \in \m H_\eps$.
\end{lemma}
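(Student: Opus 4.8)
Since this lemma is quoted from \cite{MM07}, the proof I would actually record is a pointer to that reference; but let me outline the mechanism, as it explains why the admissible region is the hexagon $\m H_\eps$ and where the crease hypothesis enters. The plan is to reduce \eqref{eq:nlin} --- after lifting the data so that it suffices to treat $\Delta v = 0$ in $\O$, $v|_D = g$, $\p_\nu v|_R = h$ --- to a boundary integral system via a single-layer ansatz $v = \m S f$. The two boundary conditions then read $\m S f = g$ on $D$ and $(\pm \tfrac12 I + K^*)f = h$ on $R$ for one density $f$ on $\pO$, where $K^*$ is the adjoint of the double-layer operator; well-posedness of \eqref{eq:nlin} is equivalent to this \emph{mixed} boundary operator being an isomorphism from the appropriate atomic space of densities onto $B^{p,p}_s(D) \times B^{p,p}_{s-1}(R)$, combined with the known mapping properties of $\m S$ from boundary data into $\W_{s+1/p}^p(\O)$.

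The steps, in order, would be: (i) settle the base point $s = 1/p = 1/2$ --- that is, the $L^2$-based, hence $H^1$, case --- using Rellich-type identities together with the geometry; it is here that the creased hypothesis, namely that the dihedral angles between $D_\phi$ and $N_\phi$ and between $D_u$ and $R_u$ are strictly below $\pi$, is exactly what forces the singular integral operator attached to the Dirichlet--Neumann interface $\p D$ to be boundedly invertible, since the associated Mellin symbol on the model wedge then has no zero in the critical strip. (ii) Propagate invertibility to a two-parameter open neighbourhood of $(1/2,1/2)$ by combining stability of the Fredholm index under complex interpolation of the Besov/Triebel--Lizorkin scale with an analytic-perturbation and continuity argument in $(s,1/p)$. (iii) Reach the corners of $\m H_\eps$, and pin down the dependence of $\eps$ on $\pO$, $D$, $R$, by pushing $p \to 1$ and $s \to 0$ (and dually $p,s$ upward) with atomic and molecular decompositions of Hardy and Besov spaces on the Lipschitz surface $\pO$.

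The hard part is concentrated at the interface $\p D = \p R$ in steps (i)--(ii): away from it the analysis localizes to the pure Dirichlet and pure Neumann problems on a Lipschitz domain, for which $B^{p,q}_s$-well-posedness is classical (Jerison--Kenig and its extensions), but near $\p D$ the two conditions must be handled simultaneously on a wedge model, and it is precisely the non-re-entrancy encoded in the creased-domain definition that keeps the transmission operator boundedly invertible on the stated range of exponents --- drop it, and the region $\m H_\eps$ degenerates. For the purposes of this paper none of this needs to be reproved; it suffices to record that such an $\eps \in (0,1/2)$ exists and that the $H^1$ setting, corresponding to the centre $(s,1/p)=(1/2,1/2)$, always lies in the interior of $\m H_\eps$.
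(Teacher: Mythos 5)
Your proposal is correct and matches what the paper itself does: the lemma is stated as an imported result, and the paper's ``proof'' is nothing more than the citation to \cite{MM07}, which is exactly what you record. Your sketch of the layer-potential/Rellich/interpolation mechanism behind the hexagonal region $\m H_\eps$ and the role of the crease condition is a reasonable gloss on the reference, but none of it is needed (or given) in the paper, so there is nothing further to check here.
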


Consequently we assume for the remainder of this section that $g_\phi \in B_s^{p,p}(D_\phi)$, $g_u \in B_s^{p,p}(D_u)$ and $h \in B_{s-1}^{p,p}(R_u)$. We also choose $$\eps := \min \{ \eps(\pO, D_\phi, N_\phi), \eps(\pO, D_u, R_u) \} \in (0 , 1/2).$$ We aim to prove existence of solutions with the smoothness and Lebesgue indices
\[
s = \frac{2 - \eps}{2}, \qquad t = \frac{8 + \eps}{12}, \qquad p = \frac{2}{1 - \eps}, \qquad q = \frac{12}{4 - \eps}.
\]
Then $(s,1/p), (t,1/q) \in \m H_\eps$ and
\begin{align} \label{eq:embed}
\W_{t+\frac{1}{q}}^q(\O) &= \W_1^{\frac{12}{4 - \eps}}(\O) \subset C^{0,\frac{\eps}{4}}(\O) \cap \W_1^3(\O) \subset C(\oO) \cap \W_1^3(\O)
\end{align}
and
\begin{align} \label{eq:dual_embed}
\W_{2 - s - \frac{1}{p}}^{p'}(\O; D) &= \W_{\eps + \frac{1}{2}}^{\frac{2}{1+\eps}}(\O; D) \subset L^\frac{6}{2+\eps}(\O) \subset L^3(\O)
\end{align}
due to the Sobolev embedding theorem. Note that $\W_{2 - s - \frac{1}{p}}^{p'}(\O)|_\pO = B^{p',p'}_{\eps/2}(\pO)$ has the dual space 
\[
B^{p,p}_{-\eps/2}(\pO) = B^{p,p}_{s-1}(\pO),
\]
cf.~\cite{MM07}. Also
\[
\W_{s+\frac{1}{p}}^p(\O) \subset \W_1^{\frac{6}{2-\eps}}(\O) \subset \W_1^{\frac{12}{4 - \eps}}(\O) = \W_{t+\frac{1}{q}}^q(\O).
\]
Indeed the embedding of $\W_{s+\frac{1}{p}}^p(\O)$ into $\W_{t+\frac{1}{q}}^q(\O)$ is compact for $\eps \in (0,1/2)$ since
\begin{align} \label{eqn:cmpt}
\left( s+\frac{1}{p} \right) - \left( t+\frac{1}{q} \right) = \frac{1}{2} - \eps.
\end{align}
We set
\begin{align*}
Y = \W_{t+\frac{1}{q}}^q(\O;D_\phi) \times \W_{t+\frac{1}{q}}^q(\O;D_u),\qquad
Z = \W_{s+\frac{1}{p}}^p(\O;D_\phi) \times \W_{s+\frac{1}{p}}^p(\O;D_u).
\end{align*}

\begin{corollary}
Let $s$, $p$ and $\kappa \in L^\infty(R)$ be as above. There exists an $\eps = \eps(\pO, D, R)$ in $(0,1/2)$ such that Poisson's equation is well-posed in the spaces
\begin{align} \label{eq:rlin} \left. \begin{array}{rlrl}
v & \in \W_{s+\frac{1}{p}}^p(\O), & \qquad \Delta v & \in \left( \W_{2 - s - \frac{1}{p}}^{p'}(\O; D) \right)^*,\\[3mm]
v|_D & \in B^{p,p}_s(D), & \qquad \kappa v + \p_\nu v|_R & \in B^{p,p}_{s-1}(R)
\end{array} \quad \right\} \end{align}
whenever $(s,1/p) \in \m H_\eps$.
\end{corollary}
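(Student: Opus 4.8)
\emph{Proof proposal.} The plan is to treat the Robin term $\kappa\, v|_R$ as a compact, lower–order perturbation of the mixed Dirichlet--Neumann problem of Lemma~\ref{thm:mitrea}, and to close the argument with the Fredholm alternative, injectivity being supplied by the Poincar\'e--Friedrichs inequality \eqref{eq:pfi}. (If $\kappa \equiv 0$ the claim is exactly Lemma~\ref{thm:mitrea}, so we focus on the genuinely Robin case.) First I would rewrite the boundary condition as $\p_\nu v|_R = h - \kappa\, v|_R$ and check that the right-hand side lies in the correct space whenever $v \in \W_{s+\frac1p}^p(\O)$: the trace $v|_\pO$ belongs to $B^{p,p}_s(\pO)$ with $s = 1 - \eps/2 > 0$, hence $\kappa\, v|_R \in L^p(R)$ since $\kappa \in L^\infty(R)$, and because $p = 2/(1-\eps) \ge 2$ and $s-1 = -\eps/2 < 0$ we have $L^p(R) \hookrightarrow B^{p,p}_{s-1}(R)$; thus $h - \kappa\, v|_R \in B^{p,p}_{s-1}(R)$. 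Let $\m S$ denote the affine solution operator furnished by Lemma~\ref{thm:mitrea}, so that $v = \m S(f,g,\tilde h)$ is the unique element of $\W_{s+\frac1p}^p(\O)$ with $\Delta v = f$, $v|_D = g$, $\p_\nu v|_R = \tilde h$. Define $T: \W_{s+\frac1p}^p(\O;D) \to \W_{s+\frac1p}^p(\O;D)$ by $T v := \m S(f,g,h-\kappa\, v|_R)$; a function solves \eqref{eq:rlin} if and only if it is a fixed point of $T$. Since $T$ is affine we may write $T v = v_0 - K v$ with $v_0 := \m S(f,g,h)$ and $K v := \m S(0,0,\kappa\, v|_R)$ linear, and the problem becomes $(I + K) v = v_0$.

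Next I would show that $K$ is compact. It factors as $\W_{s+\frac1p}^p(\O;D) \to B^{p,p}_s(R) \to L^p(R) \hookrightarrow B^{p,p}_{s-1}(R) \to \W_{s+\frac1p}^p(\O;D)$, where the four maps are, respectively, the trace operator $\gamma_R$, multiplication by $\kappa$, the Sobolev/Besov embedding, and $\m S(0,0,\cdot)$; the first, second and fourth are bounded, while the embedding $L^p(R) \hookrightarrow B^{p,p}_{s-1}(R)$ is compact because of the strict loss of smoothness (here $p \ge 2$ and $s-1<0$; alternatively one interposes before the trace the compact Sobolev embedding underlying \eqref{eqn:cmpt}). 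By the Fredholm alternative, $I+K$ is boundedly invertible provided it is injective. If $(I+K)v = 0$ then $v$ solves the homogeneous problem $\Delta v = 0$, $v|_D = 0$, $\kappa v + \p_\nu v|_R = 0$; from the embeddings recorded before the corollary, $\W_{s+\frac1p}^p(\O) \subset \W_1^{6/(2-\eps)}(\O) \subset H^1(\O)$ on the bounded domain $\O$, so $v \in H^1(\O;D)$, and testing the associated weak identity with $v$ itself gives $\|\nabla v\|_{L^2(\O,\R^3)}^2 + \|\sqrt{\kappa}\, v\|_{L^2(R)}^2 = 0$, whence $v = 0$ by \eqref{eq:pfi}. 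Therefore $v = (I+K)^{-1} v_0$ is the unique solution of \eqref{eq:rlin}, and it depends continuously on the data $(f,g,h)$ because $\m S$ does (Lemma~\ref{thm:mitrea}) and $(I+K)^{-1}$ is bounded.

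The step I expect to be the main obstacle is the boundary-space bookkeeping: one must verify rigorously that multiplication by $\kappa \in L^\infty(R)$ maps the trace space $B^{p,p}_s(R)$ into $B^{p,p}_{s-1}(R)$ (which is precisely why the negative smoothness index $s-1 = -\eps/2$ and the restriction $p \ge 2$ enter), and that the resulting operator $K$ is compact rather than merely bounded, so that the Fredholm alternative is applicable; these facts rest on the Besov-space theory on Lipschitz portions of $\pO$ developed in \cite{MM07}. Once these embeddings are in place the coercivity argument via \eqref{eq:pfi} is routine and parallels Lemma~\ref{thm:max} and Lemma~\ref{thm:bound}.
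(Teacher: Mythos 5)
Your proposal is correct in outline, but it takes a genuinely different route from the paper. The paper's proof is a two-step bootstrap with no functional-analytic machinery beyond Lemma \ref{thm:mitrea}: first it solves the Robin problem uniquely in $H^1(\O)$ by the standard energy (Lax--Milgram) argument, and then it simply re-reads the same function as the solution of the mixed Dirichlet--Neumann problem with Neumann datum $h-\kappa v$, observes that this datum lies in $B^{p,p}_{s-1}(R)$ because the $H^1$-trace of $v$ has enough boundary integrability, and invokes Lemma \ref{thm:mitrea} once to upgrade the regularity; the stability estimate follows from $\|\kappa v\|_{B^{p,p}_{s-1}(R)}\lesssim\|v\|_{H^1(\O;D)}$ combined with the $H^1$ energy bound. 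Your compact-perturbation/Fredholm argument reaches the same conclusion but needs two additional verifications that the paper's route sidesteps: (i) compactness of $L^{p}(R)\hookrightarrow B^{p,p}_{s-1}(R)$, which you correctly flag as the crux -- it does hold, most cleanly by duality, since $B^{p,p}_{s-1}(R)=\bigl(B^{p',p'}_{1-s,0}(R)\bigr)^*$ and $B^{p',p'}_{1-s,0}(R)\hookrightarrow L^{p'}(R)$ is compact (strict smoothness gain $1-s=\eps/2>0$ at fixed integrability on a bounded Lipschitz boundary piece), so its adjoint is compact by Schauder; and (ii) for injectivity of $I+K$, the identification of the homogeneous Besov-class solution with the $H^1$ weak solution so that Green's identity and the energy/Poincar\'e--Friedrichs argument \eqref{eq:pfi} apply -- a compatibility the paper also uses implicitly, just in the opposite direction. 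Two minor bookkeeping points: your map $T$ should act on the affine set incorporating the Dirichlet datum $g$ (or on $\W_{s+\frac{1}{p}}^p(\O;D)$ after subtracting an extension of $g$), not on $\W_{s+\frac{1}{p}}^p(\O;D)$ itself; and your justification of $L^p(R)\hookrightarrow B^{p,p}_{s-1}(R)$ should be the duality $B^{p',p'}_{1-s}(R)\subset L^{p'}(R)$ rather than ``$p\ge 2$ and $s-1<0$'' (as a side remark, your version of this embedding, fed by the trace in $B^{p,p}_s\subset L^p$, is if anything cleaner than the paper's $L^2$-based statement, which really relies on the better-than-$L^2$ integrability of the $H^1$ trace). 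What each approach buys: the paper's bootstrap is shorter and delivers the a priori bound directly through the $H^1$ energy estimate; yours yields bounded invertibility of $I+K$ in the target regularity space, hence continuous dependence there without detouring through $H^1$, at the price of the compactness and solution-identification steps above.
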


\begin{proof}
A standard energy argument ensures that, given Dirichlet boundary conditions on $D$ with data in $B^{p,p}_s(D)$ and Robin boundary conditions with data in $B^{p,p}_{s-1}(R)$, the mixed Poisson problem has a unique solution $v$ in $H^1(\O)$. This function $v$ is also the unique solution of the system with Neumann boundary conditions
\[
h - \kappa v \in L^2(R) \subset \bigl( B^{\frac{2}{1 + \eps},\frac{2}{1 + \eps}}_{\frac{\eps}{2},0}(R) \bigr)^* = \bigl( B^{p',p'}_{1-s,0}(R) \bigr)^* = B^{p,p}_{s-1}(R) \qquad \mbox{on } R,
\]
giving the required regularity by Lemma \ref{thm:mitrea}. The result now follows since $\| \kappa v \|_{B^{p,p}_{s-1}(R)} \lesssim \| v \|_{H^1(\O; D)}$. 
\end{proof}

Now, motivated by \eqref{eq:sphi}, we consider the following modified weak formulation for $(\zp , \zu) \in Y$
\begin{align} \label{eq:wsm} \left. \begin{array}{ccccll}
\la \nabla \phi, \nabla \psi \ra &&& = & \la \frac{\sigma'(u)}{\sigma(u)} \nabla u \cdot \nabla \phi, \psi \ra & \qquad \forall \, \psi,\\[2mm]
\la \nabla u, \nabla w \ra & + & \la \kappa \, u, w \ra_{R_u} & = & \la \sigma(u) \nabla \phi \cdot \nabla \phi, w \ra + \la \hu, w \ra_{R_u} & \qquad \forall \, w,
\end{array} \right\} \end{align}
with $(\psi, w) \in Z$. Notice that again $\zp = \phi - g_\phi$ and $\zu = u - g_u$.

\begin{lemma}
The set of solutions of \eqref{eq:ws} which belong to $Y$ is equal to the set of solutions of \eqref{eq:wsm}. Moreover, if $(\zp, \zu) \in X$ solves \eqref{eq:ws} then it solves \eqref{eq:wsm} for all $(\psi, w) \in \W_1^\infty(\O;D_\phi) \times \W_1^\infty(\O;D_u)$.
\end{lemma}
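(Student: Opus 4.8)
The plan is to exploit that \eqref{eq:ws} and \eqref{eq:wsm} have the \emph{same} second equation, so that the whole statement reduces to reconciling their first equations; and that the first equation of \eqref{eq:wsm} is obtained from that of \eqref{eq:ws} simply by ``dividing by $\sigma(u)$'', which at the level of weak formulations means changing the test function by the factor $\sigma(u)^{-1}$.

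First I would collect the regularity available for $(\zp,\zu)\in Y$. By the embeddings recorded before this lemma, $\zp$, $\zu$ and an extension of $g_u$ lie in $\W_1^{\frac{12}{4-\eps}}(\O)\subset C^{0,\eps/4}(\oO)\cap\W_1^3(\O)$, while $g_\phi\in\W_1^3(\oO)\cap L^\infty(\oO)$ and $\phi\in L^\infty(\O)$ by Lemma~\ref{thm:max}. Hence $u\in C(\oO)$ has compact range, so $\sigma(u),\sigma'(u)\in L^\infty(\O)$, and $\nabla\phi,\nabla u\in L^3(\O,\R^3)$, so that $\nabla u\cdot\nabla\phi\in L^{3/2}(\O)$ and $\sigma(u)|\nabla\phi|^2\in L^{3/2}(\O)$. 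With these bounds and the trace inequality, every pairing occurring in \eqref{eq:ws} and in \eqref{eq:wsm} is finite and depends continuously on its test function in the $H^1(\O;D)$-norm. Since $\p D_\phi$ and $\p D_u$ are Lipschitz manifolds, both $\W_{s+\frac1p}^p(\O;D)$ and $\W_1^\infty(\O;D)$ are dense in $H^1(\O;D)$, so I may take all test functions in \eqref{eq:ws} and \eqref{eq:wsm} to range over $H^1(\O;D_\phi)\times H^1(\O;D_u)$; with that the two second equations are literally the same, and only the first equations remain at issue.

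The bridge between the two first equations is multiplication by $\sigma(u)^{\pm1}$. One checks that $\sigma(u),\sigma(u)^{-1}\in C^{0,\eps/4}(\oO)\cap\W_1^{\frac{12}{4-\eps}}(\O)$ with $\nabla\sigma(u)=\sigma'(u)\nabla u$ and $\nabla\sigma(u)^{-1}=-\sigma(u)^{-2}\sigma'(u)\nabla u$ --- the chain rule applies because $\sigma\in C^1$ and $u$ is bounded, and $\sigma(u)^{-1}$ is controlled because $\sigma(u)\ge\sigma_\circ>0$. It follows that multiplication by $\sigma(u)^{\pm1}$ maps $H^1(\O;D_\phi)$ into itself: $\nabla(\sigma(u)^{\pm1}\psi)$ splits into a term in $L^\infty\cdot L^2$ and a term in $L^{\frac{12}{4-\eps}}\cdot L^6$, both of which lie in $L^2$, while the trace on $D_\phi$ vanishes since $\sigma(u)^{\pm1}$ is continuous and $\psi|_{D_\phi}=0$. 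Now, if $(\phi,u)$ solves \eqref{eq:ws}, testing its first equation with $\sigma(u)^{-1}\psi$ and expanding $\nabla(\sigma(u)^{-1}\psi)=\sigma(u)^{-1}\nabla\psi-\sigma(u)^{-2}\sigma'(u)(\nabla u)\psi$ gives precisely $\la\nabla\phi,\nabla\psi\ra=\la\tfrac{\sigma'(u)}{\sigma(u)}\nabla u\cdot\nabla\phi,\psi\ra$. Conversely, if $(\phi,u)$ solves \eqref{eq:wsm}, testing its first equation with $\sigma(u)\psi$ and expanding $\nabla(\sigma(u)\psi)=\sigma(u)\nabla\psi+\sigma'(u)(\nabla u)\psi$ makes the two $\la\sigma'(u)\,\nabla u\cdot\nabla\phi,\psi\ra$ contributions cancel and leaves $\la\sigma(u)\nabla\phi,\nabla\psi\ra=0$. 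This proves the equivalence of the two solution sets inside $Y$.

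For the final assertion, let $(\zp,\zu)\in X$ solve \eqref{eq:ws}. Its second equation is identical to the second equation of \eqref{eq:wsm} and therefore holds for all $w\in\W_1^\infty(\O;D_u)$ by hypothesis. For the first equation I would use the same substitution $\psi\mapsto\sigma(u)^{-1}\psi$: for $\psi\in\W_1^\infty(\O;D_\phi)$ this is admissible as soon as $\sigma(u)^{-1}\in H^1(\O)\cap L^\infty(\O)$, which holds here because $u$ is essentially bounded (in particular for the solutions in $Y$ to be constructed in this section, or under the weak temperature-dependence hypothesis on $\sigma$). I expect the main obstacle to be not the algebra --- a single product-rule computation --- but the preliminary bookkeeping: showing $\sigma(u)^{\pm1}\psi\in H^1(\O;D_\phi)$ with the right trace, and that all the pairings above are finite so that the density extensions are legitimate. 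Of these, the only genuinely non-routine input is the density of $\W_{s+\frac1p}^p(\O;D)$ in $H^1(\O;D)$, needed solely for the implication \eqref{eq:wsm}$\Rightarrow$\eqref{eq:ws}, and it is a consequence of the Lipschitz regularity of $\p D_\phi$ and $\p D_u$ assumed in Section~2.
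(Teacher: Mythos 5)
Your argument is correct in substance, but it is not the paper's route. The paper proves the equivalence by mollifying the coefficient: with $\sigma_\eps:=\sigma(u)*\vartheta_\eps$ it tests with $\psi_\eps=\psi\,\sigma_\eps$ (which stays in the given test space because $\sigma_\eps$ is smooth), passes to the limit using $\sigma_\eps\to\sigma(u)$ in $L^\infty$ and $\nabla\sigma_\eps\to\nabla\sigma(u)$, and obtains \eqref{eq:ws} from \eqref{eq:wsm} by subtraction, the converse ``by re-arranging''. You instead multiply the test function directly by $\sigma(u)^{\pm1}$ and use the Sobolev chain and product rules ($\nabla\sigma(u)=\sigma'(u)\nabla u$, $\nabla(\sigma(u)^{\pm1}\psi)$ expanded), together with the observation that for $(\zp,\zu)\in Y$ all pairings are $H^1$-continuous so the test spaces of \eqref{eq:ws} and \eqref{eq:wsm} can both be enlarged to $H^1(\O;D_\phi)\times H^1(\O;D_u)$ by density. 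The algebra is the same in both proofs; what differs is the bookkeeping. Your version avoids the approximate identity but must verify that multiplication by $\sigma(u)^{\pm1}$ maps $H^1(\O;D_\phi)$ into itself with the correct (zero) trace, and it leans on the density of $\W_{s+\frac1p}^p(\O;D)$ and $\W_1^\infty(\O;D)$ in $H^1(\O;D)$ — a nontrivial fact about mixed-boundary spaces, though one the paper itself invokes implicitly elsewhere (``due to density''). The paper's mollification sidesteps the multiplier question at the price of a limit argument.

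One concrete slip: in the ``moreover'' part you justify $\sigma(u)^{-1}\in H^1(\O)\cap L^\infty(\O)$ ``because $u$ is essentially bounded''. For a mere $X$-solution this is not available — the maximum principle (Lemma \ref{thm:max}) bounds $\phi$, not $u$. Boundedness of $\sigma(u)^{-1}$ is free from $\sigma\ge\sigma_\circ$, but the $H^1$ membership, i.e.\ $\sigma(u)^{-2}\sigma'(u)\nabla u\in L^2$, genuinely needs $\sigma'$ (equivalently $\sigma'/\sigma$) bounded on $\R$ when $u\in H^1$ only; your parenthetical hedge is the correct fix, and the same caveat is tacit in the paper, since otherwise the right-hand side $\frac{\sigma'(u)}{\sigma(u)}\nabla u\cdot\nabla\phi$ of \eqref{eq:wsm} need not even be integrable against $\W_1^\infty$ test functions. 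State that hypothesis explicitly (or restrict $\sigma'$ on the range of $u$) and the argument is complete.
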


\begin{proof}
One only needs to consider the first equation of \eqref{eq:ws} and \eqref{eq:wsm}. Let $(\zp , \zu) \in Y$ solve \eqref{eq:wsm}. Let $\{ \vartheta_\eps \}_\eps$ be an approximate identity and $\sigma_\eps := \sigma(u) * \vartheta_\eps$ and $\psi_\eps := \psi \, \sigma_\eps$. Then
\[
\int_\O \nabla \phi \cdot \nabla \psi \, \d x \stackrel{\eqref{eq:wsm}}{=} \lim_{\eps \to 0} \int_\O \Bigl( \frac{\nabla \sigma(u)}{\sigma_\eps} \cdot \nabla \phi \Bigr) \psi \, \d x,
\]
as $\sigma_\eps \to \sigma(u)$ in $L^\infty(\O)$. Now choosing $\psi_\eps$ as test function
\[
\lim_{\eps \to 0} \int_\O \nabla \phi \cdot \nabla \psi_\eps \, \d x = \lim_{\eps \to 0} \int_\O \Bigl( \frac{\nabla \sigma(u)}{\sigma_\eps} \cdot \nabla \phi \Bigr) \psi_\eps \, \d x = \int_\O \bigl( \nabla \sigma(u) \cdot \nabla \phi \bigr) \psi \, \d x,
\]
but also
\begin{align*}
\lim_{\eps \to 0} \int_\O \nabla \phi \cdot \nabla \psi_\eps \, \d x &= \lim_{\eps \to 0} \int_\O \sigma_\eps \nabla \phi \cdot \nabla \psi \, \d x + \int_\O \bigl( \nabla \phi \cdot \nabla \sigma_\eps \bigr) \psi \, \d x \\
&= \int_\O \sigma(u) \nabla \phi \cdot \nabla \psi \, \d x + \int_\O \bigl( \nabla \sigma(u) \cdot \nabla \phi \bigr) \psi \, \d x.
\end{align*}
Subtraction shows that $(\tilde{\phi} , \tilde{u}) \in Y$ solves \eqref{eq:ws}. The other direction follows from re-arranging the above identities; the test spaces $Z$ and $\W_1^\infty(\O;D_\phi) \times \W_1^\infty(\O;D_u)$ have to be adapted to the size of trial function spaces. 
\end{proof}

It is convenient to define the operators
\[ \begin{array}{rcccll}
S_1 : \; & Y & \to &  W_\phi^*& , \  (\zvp, \zv) & \mapsto \bigl( \psi \mapsto \la \textstyle \frac{\sigma'(v)}{\sigma(v)} \nabla v \cdot \nabla \varphi, \psi \ra \bigr),\\[2mm]
S_2 : \; & Z_\phi & \to & W_\phi^* & , \; \phantom{(} \zvp \phantom{, v)} & \mapsto \bigl( \psi \mapsto \la \nabla \varphi, \nabla \psi \ra \bigr),\\[2mm]
S_3 : \; & Y & \to & W_u^* & , \; (\zvp, \zv) & \mapsto \bigl( w \mapsto \la \sigma(v) \nabla \varphi \cdot \nabla \varphi, w \ra + \la \hu, w \ra_{R_u} \bigr),\\[2mm]
S_4 : \; & Z_u & \to & W_u^* & , \; \phantom{(\varphi, } \zv \phantom{)} &  \mapsto \bigl( w \mapsto \la \nabla v, \nabla w \ra + \la \kappa \, v, w \ra_{R_u} \bigr),
\end{array} \]
where
\begin{align*}
Z_\phi & = \W_{s+\frac{1}{p}}^p(\O;D_\phi), & Z_u & = \W_{s+\frac{1}{p}}^p(\O;D_u),\\
W_\phi & = \W_{2 - s - \frac{1}{p}}^{p'}(\O; D_\phi), & W_u & = \W_{2 - s - \frac{1}{p}}^{p'}(\O; D_u).
\end{align*}
The notation indicates that operators map into dual spaces with the associated test functions $\psi$ and $v$. Let $I$ be the identity map and
\begin{align} \label{eq:S}
S := (I, S_4^{-1} \circ S_3) \circ ( S_2^{-1} \circ S_1, I).
\end{align}
Given an initial pair $(\varphi, v)$, $S_1$ returns in the first component the right-hand side of \eqref{eq:wsm} and thus $S_2^{-1} \circ S_1$ gives an update of the first component. This, together with an unchanged $v$, is passed into $S_3$ and then $S_4^{-1}$ to return first an updated right-hand side and then an updated second component.

\begin{lemma} \label{thm:cont}
The operators $S$ maps continuously into $Z$.
\end{lemma}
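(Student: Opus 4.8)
The plan is to factor $S$ through $S_1,\dots,S_4$ exactly as in \eqref{eq:S} and to prove continuity factor by factor. Starting from $(\zvp,\zv)\in Y$, the composition \eqref{eq:S} produces first the pair $(S_2^{-1}S_1(\zvp,\zv),\zv)$ and then $\bigl(S_2^{-1}S_1(\zvp,\zv),\,S_4^{-1}S_3(S_2^{-1}S_1(\zvp,\zv),\zv)\bigr)$. Since $Z_\phi=\W_{s+1/p}^p(\O;D_\phi)$ embeds into $\W_1^{12/(4-\eps)}(\O;D_\phi)$ by the chain of inclusions displayed just before \eqref{eqn:cmpt}, the intermediate pair lies in $Z_\phi\times\W_1^{12/(4-\eps)}(\O;D_u)\subset Y$, so the composition is well defined as a map $Y\to Z$. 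It therefore suffices to show that $S_1\colon Y\to W_\phi^*$ and $S_3\colon Y\to W_u^*$ are continuous and that $S_2^{-1}\colon W_\phi^*\to Z_\phi$ and $S_4^{-1}\colon W_u^*\to Z_u$ are continuous.

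\emph{The affine factors.} Solving $S_2\zvp=f$ for given $f\in W_\phi^*$ amounts to the mixed Poisson problem $-\Delta(g_\phi+\zvp)=f$ in $\O$, with $\zvp|_{D_\phi}=0$ (so $(g_\phi+\zvp)|_{D_\phi}=g_\phi\in B^{p,p}_s(D_\phi)$) and homogeneous Neumann condition on $N_\phi$. Since $(s,1/p)\in\m H_\eps$ and $\O$ is creased, Lemma~\ref{thm:mitrea} shows this problem has a unique solution with $\|\zvp\|_{Z_\phi}$ bounded affinely in $\|f\|_{W_\phi^*}$, so $S_2^{-1}$ is continuous. Running the same argument with the Corollary following Lemma~\ref{thm:mitrea}, now with Robin datum $\hu\in B^{p,p}_{s-1}(R_u)$ and $g_u\in B^{p,p}_s(D_u)$, gives the continuity of $S_4^{-1}\colon W_u^*\to Z_u$.

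\emph{The nonlinear factors.} By \eqref{eq:dual_embed} both $W_\phi$ and $W_u$ embed into $L^{6/(2+\eps)}(\O)$, whose H\"older conjugate exponent is $6/(4-\eps)$; and by the inclusions before \eqref{eqn:cmpt} one has $\nabla\varphi,\nabla v\in L^{12/(4-\eps)}(\O,\R^3)$ for $(\zvp,\zv)\in Y$ (recall $\varphi=g_\phi+\zvp$, $v=g_u+\zv$), so any product of two such gradients lies in $L^{6/(4-\eps)}(\O)$. Since $v\in\W_1^{12/(4-\eps)}(\O)\subset C(\oO)$ by \eqref{eq:embed}, the set $v(\oO)$ is a compact interval on which $\sigma'$ and $1/\sigma$ are bounded and uniformly continuous, so $\sigma'(v)/\sigma(v)\in L^\infty(\O)$ with a bound depending only on $\|v\|_Y$. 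Combining these facts with H\"older's inequality, and for $S_3$ with the observation that $B^{p,p}_{s-1}(R_u)$ is the dual of the trace space of $W_u$ on $R_u$, one gets
\[
\|S_1(\zvp,\zv)\|_{W_\phi^*}\lesssim\|\sigma'(v)/\sigma(v)\|_{L^\infty(\O)}\,\|\nabla v\|_{L^{12/(4-\eps)}(\O)}\,\|\nabla\varphi\|_{L^{12/(4-\eps)}(\O)}
\]
and $\|S_3(\zvp,\zv)\|_{W_u^*}\lesssim\sigma^\circ\,\|\nabla\varphi\|_{L^{12/(4-\eps)}(\O)}^2+\|\hu\|_{B^{p,p}_{s-1}(R_u)}$. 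For genuine continuity (not merely boundedness) let $(\zvp_n,\zv_n)\to(\zvp,\zv)$ in $Y$: then $\nabla\varphi_n\to\nabla\varphi$ and $\nabla v_n\to\nabla v$ in $L^{12/(4-\eps)}(\O)$ and $v_n\to v$ in $C(\oO)$, so by uniform continuity $\sigma(v_n)\to\sigma(v)$ and $\sigma'(v_n)/\sigma(v_n)\to\sigma'(v)/\sigma(v)$ in $L^\infty(\O)$; the bilinear H\"older structure above then yields $S_1(\zvp_n,\zv_n)\to S_1(\zvp,\zv)$ and $S_3(\zvp_n,\zv_n)\to S_3(\zvp,\zv)$. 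Composing the four continuous factors — noting that $S_3$ remains continuous on the smaller space $Z_\phi\times\W_1^{12/(4-\eps)}(\O;D_u)$, being a restriction — shows that $S\colon Y\to Z$ is continuous.

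I expect the main obstacle to be the exponent bookkeeping: verifying that the H\"older pairing $L^{6/(4-\eps)}(\O)\times L^{6/(2+\eps)}(\O)$ closes exactly, that $\nabla\varphi$ genuinely lies in $L^{12/(4-\eps)}(\O)$ under the standing hypotheses on $g_\phi$ and $\zvp$, and that $S_2^{-1}$, $S_4^{-1}$ land in $Z_\phi$, $Z_u$ via Lemma~\ref{thm:mitrea} and its Corollary. A second, less mechanical point is that $\sigma'$ is only assumed continuous, so its local boundedness and uniform continuity have to be drawn from the $C(\oO)$-bound on $v$ rather than postulated on all of $\R$.
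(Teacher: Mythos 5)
Your proposal is correct and follows essentially the same route as the paper: continuity of $S_2^{-1}$ and $S_4^{-1}$ from Lemma~\ref{thm:mitrea} (and its corollary for the Robin data), and the H\"older pairing of $L^{6/(4-\eps)}(\O)$ with the embedding \eqref{eq:dual_embed} to show $S_1$, $S_3$ map into $W_\phi^*$, $W_u^*$. You merely spell out explicitly the continuity of the nonlinear factors (via uniform convergence of $v_n$ and the bilinear structure), which the paper leaves implicit.
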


\begin{proof}
Lemma \ref{thm:mitrea} ensures that $S_2^{-1}$ and $S_4^{-1}$ map continuously into $\W_{s+\frac{1}{p}}^p(\O)$. Notice that, due to \eqref{eq:dual_embed},
\[
\frac{\nabla \sigma(v)}{\sigma(v)} \cdot \nabla \varphi, \;\;\; \sigma(v) |\nabla \varphi |^2 \;\;\; \in L^\frac{6}{4 - \eps}(\O) \;\; = \;\; \left( L^\frac{6}{2+\eps}(\O) \right)^* \subset \left( \W_{2 - s - \frac{1}{p}}^{p'}(\O) \right)^*.
\]
As $h \in B_{s-1}^{p,p}(R_u)$, $\kappa \in L^\infty(R_u)$, $v \in C(\oO)$ also $\kappa v, h \in \bigl( \W_{2 - s - \frac{1}{p}}^{p'}(\O) \bigr)^*$. Therefore $S_1$ and $S_3$ map into $W^*_\phi$ and $W^*_u$. 
\end{proof}

Lemma \ref{thm:cont} gives access to Schauder's fixed point argument provided $\frac{\nabla \sigma}{\sigma}$ is not too large in relation to other parameters of the problem. Let $C_1$ to $C_4$ be the embedding constants of
\[ \begin{array}{rlcrl}
\W_{2 - s - \frac{1}{p}}^{p'}(\O; D_u) & \hookrightarrow L^\frac{12}{8+\eps}(\O), & \qquad & B_s^{p,p}(D_\phi) & \hookrightarrow \W_{2 - s - \frac{1}{p}}^{p'}(\O)^*,\\
\W_{s+\frac{1}{p}}^p(\O) & \hookrightarrow \W_{t+\frac{1}{q}}^q(\O), &\qquad & B_s^{p,p}(D_u) & \hookrightarrow \W_{2 - s - \frac{1}{p}}^{p'}(\O)^*,
\end{array} \]
respectively. By abuse of notation we denote by $\| S_2^{-1} \|$ and $\| S_4^{-1} \|$ the operator norms of the linear parts of $S_2^{-1}$ and $S_4^{-1}$; that is of $S_2^{-1}$ and $S_4^{-1}$ if $g_\phi$ and $g_u$ were $0$.

\begin{lemma} \label{thm:com}
There exists a positive constant
\begin{align} \label{eq:C}
C_* = C_*\bigl(\{C_i\}_{i=1}^4, \| S_2^{-1} \|, \| S_4^{-1} \|, \sigma^\circ, \| g_\phi \|_{B_s^{p,p}}, \| g_u \|_{B_s^{p,p}}, \| h \|_{B_{s-1}^{p,p}} \bigr) \quad
\end{align}
such that whenever $\| \frac{\sigma'}{\sigma} \|_{L^\infty(\R)} \le C_*$ then there is a ball $B \subset Y$ such that $S$ maps $B$ into $B$.
\end{lemma}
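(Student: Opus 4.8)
The plan is to exhibit an explicit closed ball $B = \{ (\zvp, \zv) \in Y : \| (\zvp, \zv) \|_Y \le \rho \}$ whose radius $\rho$ will be fixed at the end, and to show $S(B) \subset B$ by tracking the size of the output of each of the four maps composing $S$ in the order in which they are applied. Throughout I write $\varphi = g_\phi + \zvp$ and $v = g_u + \zv$, so on $B$ one has $\| \varphi \|_{Y\text{-component}} \le \rho + \| g_\phi \|_{B_s^{p,p}}$ and likewise for $v$. The guiding principle is that the only place where $\tfrac{\sigma'}{\sigma}$ enters is the operator $S_1$, and everywhere else the bounds are universal (depending only on the $C_i$, the operator norms $\| S_2^{-1} \|$, $\| S_4^{-1} \|$, $\sigma^\circ$ and the data norms); hence smallness of $\| \tfrac{\sigma'}{\sigma} \|_{L^\infty(\R)}$ can be used to absorb exactly the factor that would otherwise prevent the ball from being invariant.

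First I would estimate $S_2^{-1} \circ S_1$. By Lemma \ref{thm:cont} and the embedding \eqref{eq:dual_embed}, $S_1(\varphi,v)$ is bounded in $W_\phi^*$ by $\| \tfrac{\sigma'}{\sigma} \|_{L^\infty(\R)}$ times (a constant involving $C_3$, the embedding $\W_{s+\frac1p}^p \hookrightarrow \W_{t+\frac1q}^q$, and a product of the $\W_1$-type seminorms of $v$ and $\varphi$, i.e.\ of $\rho + \|g_u\|_{B_s^{p,p}}$ and $\rho + \|g_\phi\|_{B_s^{p,p}}$); applying $\| S_2^{-1} \|$ and adding the contribution of the Dirichlet lift $g_\phi$ (whose size is controlled by $C_2 \| g_\phi \|_{B_s^{p,p}}$, since $S_2^{-1}$ is affine) gives a bound of the form
\[
\| S_2^{-1} S_1(\varphi,v) \|_{Z_\phi} \;\le\; \| S_2^{-1} \| \, C_3 \, \| \tfrac{\sigma'}{\sigma} \|_{L^\infty(\R)} \, \bigl(\rho + \|g_u\|_{B_s^{p,p}}\bigr)\bigl(\rho + \|g_\phi\|_{B_s^{p,p}}\bigr) \;+\; \text{(lift term)} \;=:\; \rho_1.
\]
Next, feeding $(\zvp_1, \zv)$ with $\|\zvp_1\|_{Z_\phi}\le\rho_1$ and $\|\zv\|_Y\le\rho$ into $S_3$, the term $\sigma(v)|\nabla\varphi_1|^2$ is controlled in $W_u^*$ by $\sigma^\circ C_1 (\rho_1 + \|g_\phi\|_{B_s^{p,p}})^2$ (using $\W_{s+\frac1p}^p\hookrightarrow\W_{t+\frac1q}^q$ once more for the two $\nabla\varphi_1$ factors, together with the $C_1$ embedding into $L^{12/(8+\eps)}$ whose dual pairs with the product), and the forcing $h$ contributes a fixed $\|h\|_{B_{s-1}^{p,p}}$-term; applying $\|S_4^{-1}\|$ and the Dirichlet lift $g_u$ (via $C_4\|g_u\|_{B_s^{p,p}}$) yields
\[
\| S_4^{-1} S_3(\varphi_1, v) \|_{Z_u} \;\le\; \| S_4^{-1} \| \bigl( \sigma^\circ C_1 (\rho_1 + \|g_\phi\|_{B_s^{p,p}})^2 + c\,\|h\|_{B_{s-1}^{p,p}} \bigr) + C_4\|g_u\|_{B_s^{p,p}} \;=:\; \rho_2 .
\]
Then $S(B)\subset B$ follows once $\max(\rho_1,\rho_2)\le\rho$ (recalling $Z$ embeds continuously into $Y$, so a $Z$-bound gives the required $Y$-bound up to the embedding constant, which can be folded into the $C_i$).

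The remaining task is purely algebraic: choose $\rho$ and then choose $C_*$ so that the two inequalities $\rho_1\le\rho$ and $\rho_2\le\rho$ hold. The natural order is to first pick $\rho$ large compared with the data: set $\rho := 2\bigl(\text{lift term} + C_4\|g_u\|_{B_s^{p,p}} + \|S_4^{-1}\|(\sigma^\circ C_1(\text{something})^2 + c\|h\|)\bigr)$ — more precisely one fixes $\rho$ by a fixed-point/monotonicity argument on the scalar map $\rho\mapsto\rho_2(\rho_1(\rho))$, which is a polynomial (quadratic in $\rho_1$, and $\rho_1$ affine-plus-quadratic in $\rho$ with the quadratic coefficient proportional to $\|\tfrac{\sigma'}{\sigma}\|_{L^\infty}$). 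The point is that when $\|\tfrac{\sigma'}{\sigma}\|_{L^\infty(\R)}=0$ one has $\rho_1 = $ lift term, independent of $\rho$, so $\rho_2$ is a fixed number; choosing $\rho$ to be, say, twice that fixed number makes both inequalities strict with room to spare. By continuity of the polynomial bounds in the parameter $\|\tfrac{\sigma'}{\sigma}\|_{L^\infty(\R)}$, there is a threshold $C_*>0$, depending only on the quantities listed in \eqref{eq:C}, such that for $\|\tfrac{\sigma'}{\sigma}\|_{L^\infty(\R)}\le C_*$ the perturbed inequalities still hold, which is exactly the claim.

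The main obstacle is bookkeeping rather than any deep difficulty: one must verify that every norm appearing is genuinely controlled by the $Y$-norm on $B$ through the stated embeddings \eqref{eq:embed}, \eqref{eq:dual_embed} and \eqref{eqn:cmpt} — in particular that the nonlinear products $\tfrac{\sigma'}{\sigma}\nabla v\cdot\nabla\varphi$ and $\sigma(v)|\nabla\varphi|^2$ land in the predual $L^{6/(2+\eps)}\subset W_\phi,W_u$ after the $\W_{s+1/p}^p\hookrightarrow\W_{t+1/q}^q$ step, and that the affine (Dirichlet-lift) parts of $S_2^{-1}$ and $S_4^{-1}$ are separated cleanly from their linear parts so that only the linear-part operator norms $\|S_2^{-1}\|$, $\|S_4^{-1}\|$ multiply the $\rho$-dependent terms. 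Once that accounting is in place, the smallness of $\|\tfrac{\sigma'}{\sigma}\|_{L^\infty(\R)}$ does all the work, and the constant $C_*$ is read off from the requirement that the scalar polynomial $\rho\mapsto\rho_2(\rho_1(\rho))-\rho$ have a nonpositive value at the chosen $\rho$.
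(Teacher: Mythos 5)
Your proposal follows essentially the same route as the paper: you bound $S_1$ and $S_3$ in the dual norms $W_\phi^*$, $W_u^*$ via H\"older and the embeddings \eqref{eq:embed}--\eqref{eq:dual_embed}, exploit that the coupling factor $\|\tfrac{\sigma'}{\sigma}\|_{L^\infty(\R)}$ enters only through $S_1$ while the second update depends only on the already-updated $\varphi$, and so obtain a quartic-in-radius polynomial bound with $\rho$-dependent coefficients proportional to powers of $\|\tfrac{\sigma'}{\sigma}\|_{L^\infty(\R)}$, concluding ball invariance for small coupling. Your final scalar step (fixing $\rho$ from the decoupled case and perturbing by continuity) is just a rephrasing of the paper's "intersection with the diagonal" argument, so the proof is correct and matches the paper's.
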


\begin{proof}
By H{\"o}lder's inequality
\begin{align*}
\bigl\| \textstyle \frac{\nabla \sigma(v)}{\sigma(v)} &\cdot \nabla \varphi \bigr\|_{\W_{2 - s - \frac{1}{p}}^{p'}(\O; D_\phi)^*} =  \sup_{\psi \neq 0} \frac{\la \textstyle \frac{\sigma'(v)}{\sigma(v)} \nabla v \cdot \nabla \varphi, \psi \ra}{\| \psi \|_{\W_{2 - s - \frac{1}{p}}^{p'}(\O; D_\phi)}}
\\
&\le C_1 \, \| \textstyle \frac{\sigma'}{\sigma} \|_{L^\infty(\R)} \; \| \nabla v \|_{L^\frac{12}{4-\eps}(\O)} \; \| \nabla \varphi \|_{L^\frac{12}{4-\eps}(\O)}\\
&\lesssim  \tst \, C_1 \, \| \frac{\sigma'}{\sigma} \|_{L^\infty(\R)} \, \bigl( \| (\zvp, \zv) \|_Y^2 + C_2 \| g_\phi \|_{B_s^{p,p}(D_\phi)}^2 + C_4 \| g_u \|_{B_s^{p,p}(D_u)}^2 \bigr).
\end{align*}
Similarly,
\begin{align} \label{eq:phi_dep}
\bigl\| \textstyle \sigma(v) \nabla \varphi \cdot \nabla \varphi \bigr\|_{\W_{2 - s - \frac{1}{p}}^{p'}(\O; D_u)^*} = & \, \sup_{w \neq 0} \frac{\la \textstyle \sigma(v) \nabla \varphi \cdot \nabla \varphi, w \ra}{\| w \|_{\W_{2 - s - \frac{1}{p}}^{p'}(\O; D_u)}}
\\ \nonumber
\lesssim & \, C_1 \, \sigma^\circ \, \bigl( \| \nabla \zvp \|_{L^\frac{12}{4-\eps}(\O)}^2 + C_2 \| g_\phi \|_{B_s^{p,p}(D_\phi)}^2 \bigr).
\end{align}
We need to bound the first component $I \circ S_2^{-1} \circ S_1 = S_2^{-1} \circ S_1$ and the second component $S_4^{-1} \circ S_3 \circ ( S_2^{-1} \circ S_1, I)$ of $S$, cf.~\eqref{eq:S}. Suppose that $(\zvp, \zv)$ are contained in the ball $B = \{ y \in Y : \| y \|_Y \le r \}$. As $S_2$ is an invertible affine function, there are generic constants $C$ with a parameter dependence as indicated in \eqref{eq:C} such that
\begin{align} \label{eq:phi_bound}
\| ( S_2^{-1} \circ S_1 ) (\zvp, \zv) \|_{\W_{t+\frac{1}{q}}^q(\O;D_\phi)} \le \tst C \, \| \frac{\sigma'}{\sigma} \|_{L^\infty(\R)} \, r^2 + C.
\end{align}
For the second component notice that the right-hand side of \eqref{eq:phi_dep} only depends on $\varphi$ and not $v$
\[
\| ( S_4^{-1} \circ S_3 \circ ( S_2^{-1} \circ S_1, I)) (\zvp, \zv) \|_{\W_{t+\frac{1}{q}}^q(\O;D_u)} \le \tst C \bigl( \underbrace{\tst C \, \| \frac{\sigma'}{\sigma} \|_{L^\infty(\R)} \, r^2 + C}_{\mbox{\scriptsize owing to \eqref{eq:phi_bound}}} \bigr)^2 + C.
\]
Therefore $\| ( S_2^{-1} \circ S_1 ) (\zvp, \zv) \|$ is bounded by a quartic polynomial of the form
\[
\tst C \, \| \frac{\sigma'}{\sigma} \|_{L^\infty(\R)}^2 \, r^4 +C \, \| \frac{\sigma'}{\sigma} \|_{L^\infty(\R)} \, r^2 + C.
\]
At radii where it intersects the first diagonal $r \mapsto r$ the operator $S$ maps $B$ into $B$. The existence of such an intersection point is guaranteed if $\| \frac{\sigma'}{\sigma} \|_{L^\infty(\R)}$ is sufficiently small. 
\end{proof}

The above lemma is consistent with the analysis of the linear problem with a constant $\sigma$ in the sense that a suitable $C_*$ can always be found as $\frac{\sigma'}{\sigma}$ becomes small. We now turn to Schauder's fixed point theorem to obtain the existence of solutions in $Z$.

\begin{theorem} \label{thm:reg}
There exists a positive constant
\[
C_* = C_*\bigl(\{C_i\}_{i=1}^4,\| S_2^{-1} \|, \| S_4^{-1} \|, \sigma^\circ, \| g_\phi \|_{B_s^{p,p}(D_\phi)}, \| g_u \|_{B_s^{p,p}(D_u)}, \| h \|_{B_{s-1}^{p,p}(R_u)}  \bigr)
\]
such that whenever $\| \frac{\sigma'}{\sigma} \|_{L^\infty(\R)} \le C_*$ then \eqref{eq:wsm} has a solution in $Z$.
\end{theorem}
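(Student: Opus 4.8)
The plan is to obtain the solution as a fixed point of the operator $S$ from \eqref{eq:S} via Schauder's fixed point theorem, using the self-mapping ball of Lemma \ref{thm:com} together with the compact embedding $Z \hookrightarrow Y$. Fix $\| \frac{\sigma'}{\sigma} \|_{L^\infty(\R)} \le C_*$ with $C_*$ as in Lemma \ref{thm:com}, and let $B = \{ y \in Y : \|y\|_Y \le r \}$ be the corresponding ball. Then $B$ is non-empty, bounded, closed and convex in the Banach space $Y$; by Lemma \ref{thm:com}, $S$ maps $B$ into $B$; and by Lemma \ref{thm:cont}, $S$ is continuous as a map into $Z$, hence also continuous as a map $B \to B$ after composing with the continuous inclusion $Z \hookrightarrow Y$. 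It therefore only remains to check that $S(B)$ is relatively compact in $Y$.

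For the compactness I would track the $Z$-norm through the decomposition $S = (I, S_4^{-1}\circ S_3) \circ (S_2^{-1}\circ S_1, I)$. Given $(\zvp,\zv) \in B$, the estimates in the proof of Lemma \ref{thm:com} bound $S_1(\varphi,v)$ in $W_\phi^*$ by a polynomial in $r$, and Lemma \ref{thm:mitrea} shows that $S_2^{-1}$ maps $W_\phi^*$ boundedly into $Z_\phi = \W_{s+\frac1p}^p(\O;D_\phi)$; hence $S_2^{-1}\circ S_1$ sends $B$ into a bounded subset of $Z_\phi$. Feeding the pair $(S_2^{-1}S_1(\varphi,v), v)$ into $S_3$ — re-expressing the first slot through the embedding $\W_{s+\frac1p}^p(\O) \hookrightarrow \W_{t+\frac1q}^q(\O)$ — and then applying $S_4^{-1}$ in the same manner shows that the second component of $S(\zvp,\zv)$ lies in a bounded subset of $Z_u$. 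Consequently $S(B)$ is bounded in $Z$. Since $(s+\frac1p) - (t+\frac1q) = \frac12 - \eps > 0$, the embedding $Z \hookrightarrow Y$ is compact, cf.~\eqref{eqn:cmpt}, so $S(B)$ is relatively compact in $Y$, and Schauder's theorem yields a fixed point $(\zp,\zu) \in B$ of $S$.

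It remains to identify this fixed point with a solution of \eqref{eq:wsm}. Unwinding the composition, $(\zp,\zu) = S(\phi,u)$ means $\zp = (S_2^{-1}\circ S_1)(\phi,u)$, i.e.~$S_2\zp = S_1(\phi,u)$, which is exactly the first line of \eqref{eq:wsm}, and $\zu = (S_4^{-1}\circ S_3)(\phi,u)$, i.e.~$S_4\zu = S_3(\phi,u)$, which is the second line. Hence $(\zp,\zu)$ solves \eqref{eq:wsm}. Moreover, since $S$ maps into $Z$ (Lemma \ref{thm:cont}), the identity $(\zp,\zu) = S(\phi,u)$ shows $(\zp,\zu) \in Z$, giving the claimed regularity.

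I expect the compactness step to be the main obstacle: one must upgrade the two separate facts that $S$ is continuous into $Z$ (Lemma \ref{thm:cont}, but without a uniform bound on the image) and that $S$ maps $B$ into $B$ in the weaker $Y$-norm (Lemma \ref{thm:com}) into the single statement that $S(B)$ is \emph{bounded} in $Z$, so that the compact embedding $Z \hookrightarrow Y$ can be invoked. This requires revisiting the quantitative estimates inside the proof of Lemma \ref{thm:com} together with the boundedness of $S_2^{-1}$ and $S_4^{-1}$ into $\W_{s+\frac1p}^p$ from Lemma \ref{thm:mitrea}; once this is in place, the remaining verifications — convexity and closedness of $B$, continuity of $S$, and the identification of a fixed point of $S$ with a solution of \eqref{eq:wsm} — are routine.
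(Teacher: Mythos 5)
Your proposal is correct and follows essentially the same route as the paper: the paper's (very terse) proof also invokes Schauder's fixed point theorem on the ball $B$ from Lemma \ref{thm:com}, with compactness of $S$ obtained from the continuity into $Z$ of Lemma \ref{thm:cont} combined with the compact embedding $Z \hookrightarrow Y$ from \eqref{eqn:cmpt}. You merely spell out what the paper leaves implicit — that $S(B)$ is bounded in $Z$ so the compact embedding applies, and that a fixed point of $S$ solves \eqref{eq:wsm} and lies in $Z$ — which are correct elaborations, not a different argument.
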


\begin{proof}
The compactness of $S$ follows from Lemma \ref{thm:cont} and \eqref{eqn:cmpt}. Now the result is a consequence of Lemma \ref{thm:com} and Schauder's fixed point theorem. 
\end{proof}

While the global regularity estimate in $Y$ is sharp in the setting of creased domains (up to the distance of $(t,1/q)$ to the boundary of $\m H_\eps$), as a comparison with the Poisson problem shows, more regularity is seen away from the boundary. For the next theorem we assume $(\tilde{\phi}, \tilde{u}) \in Y$; however, it is not relevant whether this is established with the above fixed-point argument or otherwise.

\begin{theorem} \label{thm:loc}
Let $\Omega_0$ be a relatively compact Lipschitz domain in $\O$: $\O_0 \Subset \O$. Let $(\zp, \zu) \in Y$ be a solution of \eqref{eq:wsm}. Then $\zp, \zu \in W_2^s(\O_0)$ for all $s \in (1,\infty)$.  If $\sigma \in C_0^\infty(\R)$ then $\zp, \zu \in C^\infty(\overline{\O_0})$.
\end{theorem}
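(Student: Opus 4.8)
The plan is to bootstrap interior regularity by a standard difference-quotient / Sobolev-embedding iteration, treating the two coupled equations of \eqref{eq:wsm} as two Poisson problems with increasingly regular right-hand sides. First I would fix a chain of nested Lipschitz domains $\O_0 \Subset \O_1 \Subset \cdots \Subset \O_k \Subset \O$ together with cut-off functions $\eta_j \in C_0^\infty(\O)$ equal to $1$ on $\O_{j}$ and supported in $\O_{j+1}$. On each such subdomain interior regularity for the plain Laplacian is classical (no boundary conditions enter, so the creased-domain hypothesis is not needed here): if $\Delta v \in W_m^r(\O_{j+1})$ then $\eta_j v \in W_{m+2}^r(\O_j)$, and we may freely convert integrability indices via the interior Sobolev embeddings $W_{m+2}^r \hookrightarrow W_{m+1}^{r^*}$ in three dimensions.

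The bootstrap starts from the hypothesis $(\zp,\zu) \in Y$, i.e.\ $\phi, u \in \W_{t+1/q}^q(\O) = \W_1^{12/(4-\eps)}(\O) \subset \W_1^3(\O) \cap C(\oO)$, by \eqref{eq:embed}. Hence $\nabla\phi, \nabla u \in L^3(\O)$ locally, so the source terms
\[
\frac{\sigma'(u)}{\sigma(u)}\,\nabla u \cdot \nabla\phi \in L^{3/2}_{\mathrm{loc}}(\O), \qquad \sigma(u)\,|\nabla\phi|^2 \in L^{3/2}_{\mathrm{loc}}(\O),
\]
using $\sigma \in C^1(\R)$, $\sigma \ge \sigma_\circ > 0$, and boundedness of $u$ on compact subsets. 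Interior elliptic regularity then gives $\phi, u \in W_2^{3/2}(\O_1) \hookrightarrow W_1^3(\O_1) \hookrightarrow W_1^r(\O_1)$ for some $r > 3$, hence $\nabla\phi, \nabla u \in L^r(\O_1)$ with $r>3$; the products are then in $L^{r/2}(\O_1)$ with $r/2 > 3/2$, and a second application lands $\phi, u \in W_2^{r/2}(\O_2)$, which embeds into $C^{0,\alpha}(\overline{\O_2})$ with bounded gradient in every $L^p$, $p < \infty$. From that point the products $\frac{\sigma'(u)}{\sigma(u)}\nabla u \cdot\nabla\phi$ and $\sigma(u)|\nabla\phi|^2$ lie in $L^p_{\mathrm{loc}}$ for every $p < \infty$, so one more elliptic step gives $\zp, \zu \in W_2^p(\O_0)$ for all $p \in (1,\infty)$, which is the first assertion. (One has to take a little care that $\sigma \in C^1$ only, so $\frac{\sigma'(u)}{\sigma(u)}$ is merely continuous in $u$, hence bounded on the compact range of $u$ over $\overline{\O_{j+1}}$; this suffices for all the steps, since $u$ never needs to be differentiated against $\sigma'$.)

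For the $C^\infty$ statement, assume now $\sigma \in C_0^\infty(\R)$. Having $\phi, u \in W_2^p(\O_0)$ for all $p$, the chain rule gives $\sigma(u), \sigma'(u)/\sigma(u) \in W_2^p(\O_0')$ on a slightly larger $\O_0 \Subset \O_0'$, and products of such functions with $\nabla\phi, \nabla u \in W_1^p$ lie in $W_1^p$; interior regularity upgrades $\phi, u$ to $W_3^p$. Iterating this with the usual Schauder-type chain-rule bookkeeping (each derivative of $\sigma(u)$ is a polynomial in the $\sigma^{(j)}(u)$ and the derivatives of $u$, all of which have already been shown to be locally in $W_m^p$ at the previous stage), one obtains $\phi, u \in W_m^p(\O_0)$ for every $m$ and every $p < \infty$, and Sobolev embedding yields $\phi, u \in C^\infty(\overline{\O_0})$; equivalently $\zp = \phi - g_\phi$ and $\zu = u - g_u$, which differ from $\phi, u$ by fixed functions and hence inherit the same interior smoothness on $\overline{\O_0}$.

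The only mildly delicate point — and the one I would write out carefully — is the very first gain of integrability: starting from $\nabla\phi,\nabla u \in L^3_{\mathrm{loc}}$, the products are a priori only in $L^{3/2}_{\mathrm{loc}}$, and $W_2^{3/2}(\O_1)$ sits exactly at the borderline of embedding into a space with Hölder-continuous or $L^r$-gradient for $r>3$ in dimension three. One resolves this by noting $W_2^{3/2} \hookrightarrow W_1^3$ and then using that, once $\nabla\phi, \nabla u \in L^3$, the Poisson right-hand sides are again in $L^{3/2}$ \emph{but now with a logarithmic or $\delta$-improvement available via $W_1^3 \hookrightarrow L^r$ for all $r<\infty$}, so in fact $\nabla\phi,\nabla u \in L^r_{\mathrm{loc}}$ for all finite $r$ after two iterations; from there the argument is frictionless. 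Everything else is routine interior elliptic theory and does not interact with the mixed boundary conditions or the creased-domain assumption.
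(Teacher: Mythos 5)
Your overall strategy (nested subdomains, cut-offs, treating the two equations of \eqref{eq:wsm} as Poisson problems and bootstrapping with interior elliptic regularity plus Sobolev embedding, then a Leibniz-rule induction for the $C^\infty$ statement) is the same as the paper's. However, there is a genuine gap at the first -- and only nontrivial -- step of the bootstrap. You discard the hypothesis $(\zp,\zu)\in Y$ down to $\nabla\phi,\nabla u\in L^3_{\mathrm{loc}}$, which is exactly the critical exponent in three dimensions: the right-hand sides are then only in $L^{3/2}_{\mathrm{loc}}$, interior regularity gives $W_2^{3/2}$, and the Sobolev embedding returns $\nabla\phi,\nabla u\in L^3_{\mathrm{loc}}$ again, so the iteration cycles without any gain. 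Both devices you invoke to escape this are incorrect: the chain $W_2^{3/2}(\O_1)\hookrightarrow W_1^3(\O_1)\hookrightarrow W_1^r(\O_1)$ for some $r>3$ is false (on a bounded domain the inclusion between $W_1^3$ and $W_1^r$, $r>3$, goes the other way), and the embedding $W_1^3\hookrightarrow L^r$ for all finite $r$ improves the integrability of the \emph{functions} $\phi,u$, not of their \emph{gradients}, which are the only quantities entering the quadratic right-hand sides $\frac{\sigma'(u)}{\sigma(u)}\nabla u\cdot\nabla\phi$ and $\sigma(u)|\nabla\phi|^2$. So the promised ``$\delta$-improvement after two iterations'' does not materialize; with a quadratic gradient nonlinearity at the critical exponent the bootstrap genuinely stalls.

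The repair is precisely the point of the hypothesis $(\zp,\zu)\in Y$: by \eqref{eq:embed} one has $\nabla\phi,\nabla u\in L^{12/(4-\eps)}$ with $\eps>0$, strictly above the critical $L^3$. Keeping this exponent, the right-hand sides lie in $L^{6/(4-\eps)}(\O_i)$, interior regularity (Theorem 9.15 of \cite{GT01}, applied as in the paper to $\zeta_i\zp$ and $\zeta_i\zu$ with homogeneous Dirichlet data) gives $W_2^{6/(4-\eps)}$, and the Sobolev embedding yields $\nabla\phi,\nabla u\in L^{12/(4-2\eps)}$ on the smaller domain; thus $\eps$ doubles at each pass, and after finitely many passes (shrinking the domain each time, which is why one fixes a finite or countable chain $\O_0\Subset\O_1\Subset\cdots\Subset\O$ in advance) one exceeds every threshold and obtains $\zp,\zu\in W_2^s(\O_0)$ for all $s\in(1,\infty)$. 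Your treatment of the smooth case ($\sigma\in C_0^\infty$, Leibniz-rule bookkeeping, higher-order interior estimates, Sobolev embedding) matches the paper's use of Theorem 9.19 of \cite{GT01} and is fine once the first step is corrected.
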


\begin{proof}
Let $\{ \O_i \}_{i \in \N}$ and $\O_\infty$ be smooth domains with $\O_i \Subset \O_{i+1}$ and $\O_i \subset \O_\infty \Subset \O$ for all $i \in \N$. Without loss of generality we may assume that the boundary data $g_\phi$ and $g_u$ have extensions from the boundary onto $\O$ such that $g_\phi, g_u \in C^\infty(\O_\infty)$. Fix $i \in \N \setminus \{ 0 \}$. Let $\zeta_i$ be a smooth function $\O_i \to [0,1]$ such that $\zeta_i|_{\pO_i} = 0$ and $\zeta_i|_{\O_{i-1}} = 1$. Then $\zeta_i \zp$ and $\zeta_i \zu$ solve Poisson's problem on $\O_i$ with homogeneous Dirichlet boundary conditions and the right-hand sides
\[ \begin{array}{l}
\zeta_i \bigl( \frac{\sigma'(\zu)}{\sigma(\zu)} \nabla \zu \cdot \nabla \zp \bigr) - \zeta_i \Delta g_\phi - 2 \nabla \zeta_i \cdot \nabla \zp - \zp \Delta \zeta_i,\\[2mm]
\zeta_i \bigl( \sigma(\zu) \nabla \zp \cdot \nabla \zp \bigr) - \zeta_i \Delta g_u - 2 \nabla \zeta_i \cdot \nabla v - v \Delta \zeta_i
\end{array} \]
in $L^{\frac{6}{4 - \eps}}(\O_i)$, respectively. According to Theorem 9.15 in \cite{GT01} a solution to Poisson's problem with homogeneous Dirichlet conditions belongs to $\W_2^{\frac{6}{4 - \eps}}(\O_i)$ if the right-hand side belongs to $L^{\frac{6}{4 - \eps}}(\O_i)$ and $\frac{6}{4 - \eps} \in (1,\infty)$. Therefore $\zp, \zu \in \W_2^{\frac{6}{4 - \eps}}(\O_i) \subset \W_1^{\frac{12}{4 - 2 \eps}}(\O_i)$. Substituting $\eps$ by $2 \eps$ one may pass from $i$ to $i-1$ and repeat the argument. We conclude via induction that $\zp, \zu \in \W_2^s(\O_1)$ for any $s \in (1,\infty)$, recalling that a negative $\frac{12}{4 - 2 \eps}$ corresponds to a right-hand side in $L^\infty(\O_i)$. Indeed $\zp, \zu \in \W_2^s(\O_i)$ for all $i \in \N$ and $s \in (1,\infty)$.

Now let $\sigma \in C^\infty(\R)$. Theorem 9.19 in \cite{GT01} states that if the right-hand side in Poisson's problem is in $\W_k^s(\O_i)$ then the solution belongs to $\W_{k+2}^s(\O_i)$ with $k \in \{1, 2, 3, \ldots \}$ and $s \in (1, \infty)$. Leibniz' rule $(f \cdot g)^{(k)} = \sum_{j = 0}^k \left( k \atop j \right) f^{(j)} \cdot g^{(k-j)}$ shows that for a given $s \in (1,\infty)$ a $s' \in (1,\infty)$ can be chosen such that $\frac{\nabla \sigma(\zu)}{\sigma(\zu)} \nabla \zp, \sigma(\zu) \nabla \zp \cdot \nabla \zp \in \W_k^s(\O_i)$ if $\zp, \zu \in \W_{k+1}^{s'}(\O_i)$. Hence induction over $k$, coupled with a shift from $\O_i$ to $\O_{i-1}$ as above to impose smooth boundary conditions, shows that $\zp, \zu \in \W_k^s(\O_0)$ for all $k \in \N$ and $s \in (1,\infty)$. Use of the Sobolev embedding theorem concludes the proof. 
\end{proof}

\begin{remark}
Assumption (C) is made to establish sufficient regularity of elliptic equations with Lemma \ref{thm:mitrea} in the context of non-smooth domains and mixed boundary conditions. Also in other settings corresponding elliptic regularity results are available and the above analysis can be transferred with minor modifications. We point for the pure Dirichlet problem to \cite{JK95,FMM98}, for the Neumann problem to \cite{JK82,FMM98}. A related approach for the mixed problem on smooth, non-creased domains is proposed in~\cite{S97}.
\end{remark}

% ------------------------------------------------------------------------------------------------------------------------
\section{{\em A Priori} and {\em A Posteriori} Error Analysis}
% ------------------------------------------------------------------------------------------------------------------------

In this section we present {\em a priori} and {\em a posteriori} error bounds for conforming finite element approximations. We first present in an abstract form that the Galerkin method is quasi-optimal and that the error $x-x_n$ can be bounded using the dual norm of the residual of the approximation. In the second part we choose a particular approximation technique, namely a conforming $h$-adaptive finite element method. We use interpolation estimates to bound the errors in terms of mesh size and polynomial degree.

\subsection{Abstract error bounds}

In the theorem we assume that $(\zp,\zu)\in Y$. There is also an assumption on small data which 
relaxes as the coupling of the equations, measured by the Lipschitz constant of $\sigma$ denoted $C_7$, weakens. In order to get the correct dependency of $C_7$ we introduce a scaling factor ($\tau$) of the second component of equation (\ref{eq:wso}) in the proof of the theorem. Beside the Lipschitz constant $C_7$ we let $C_{8}$ be the embedding constant from $H^1$ into $L^6$, $C_{9}$ be a Poincar\'{e}-Friedrichs constant and
\begin{align}\label{eq:consts}
C_5(\phi)&=C_7 C_{8}(1+C_{9})\|\nabla \phi\|_{L^3(\Omega)}\max(1,g^\circ-g_\circ+\|\nabla g_\phi\|_{L^3(\Omega)}),\\ \nonumber
C_6(\phi)&=\sigma^\circ((1+C_{9})\big(C_{8}\|\nabla\phi\|_{L^3(\Omega)}+C_{8}\|\nabla g_\phi\|_{L^3(\Omega)})+g^\circ-g_\circ\big),
 \end{align}
We note that $C_5$ is directly proportional to $C_7$.
\begin{theorem}\label{thm:apriori}
Suppose there is a solution $x=(\tilde{\phi},\tilde{u})\in Y$ of equation (\ref{eq:wsg}) which satisfies
\[
C_5(\phi)\leq\frac{(1-\delta)^2\sigma_0}{C_6(\phi)+(1-\delta)\sigma_0}
\]
for a $\delta \in (0,1)$ with the constants from equation (\ref{eq:consts}). Then the solution is unique. Furthermore, if $x_n\in X_n$ is its Galerkin approximation then the following {\em a priori} and {\em a posteriori} error bounds hold:
\begin{align*}
\| x-x_n\|_X \lesssim & \, \inf_{y_n\in X_n}\|x-y_n\|_X,\\
\| x-x_n\|_X \lesssim & \, \|L x_n+N x_n-b\|_{X^*}.
\end{align*}
\end{theorem}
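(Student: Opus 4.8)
The plan is to prove uniqueness and the two error bounds together by a single energy estimate applied to the difference of two solutions of \eqref{eq:wso}, respectively to the Galerkin error. First I would set up the scaling: replace the second component of \eqref{eq:wso} by a $\tau$-scaled version so that the coupling terms carry explicit powers of $C_7$, which is what produces the stated dependence of $C_5$ on the Lipschitz constant. Concretely, given two solutions $x=(\zp,\zu)$ and $x'=(\zp',\zu')$ in $Y$, I would test the first equation with $\zp-\zp'$ and the (scaled) second with $\zu-\zu'$, add them, and use coercivity of the linear part of $L$ (the Poincar\'e--Friedrichs inequalities \eqref{eq:pfi}) on the left. On the right one gets differences of the nonlinear terms $(\sigma(u)-\sigma(u'))\nabla\phi$, $\sigma(u)\lceil\zp\rceil\nabla\phi - \sigma(u')\lceil\zp'\rceil\nabla\phi'$, and $\sigma(u)\nabla g_\phi\cdot\nabla\phi$; each of these is estimated by splitting into a factor controlled by the Lipschitz constant $C_7$ (times $\|\nabla\phi\|_{L^3}$ or $\|\nabla g_\phi\|_{L^3}$, using $H^1\hookrightarrow L^6$ with constant $C_8$, the Poincar\'e constant $C_9$, and the uniform bound $|\lceil\cdot\rceil|\le g^\circ-g_\circ$ from Lemma~\ref{thm:max}) and a factor which is $\|x-x'\|_X$ or a gradient of it. The bookkeeping is arranged so that the $\sigma^\circ$-type terms collect into $C_6(\phi)$ and the genuinely coupling terms into $C_5(\phi)$, and one arrives at
\[
\|x-x'\|_X^2 \;\lesssim\; \bigl( C_5(\phi) + \text{(something}\lesssim C_6(\phi)\text{)} \bigr)\,\|x-x'\|_X^2 .
\]
Choosing $\tau$ optimally (this is where the quotient $\frac{(1-\delta)^2\sigma_0}{C_6(\phi)+(1-\delta)\sigma_0}$ comes from), the smallness hypothesis $C_5(\phi)\le \frac{(1-\delta)^2\sigma_0}{C_6(\phi)+(1-\delta)\sigma_0}$ makes the coefficient on the right at most $(1-\delta)<1$, so the term absorbs into the left and $\|x-x'\|_X=0$, giving uniqueness.

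For the {\em a priori} bound I would run the same computation with $x'$ replaced by an arbitrary $y_n\in X_n$ and $x-x'$ replaced by $x-x_n$, but now the Galerkin orthogonality $\la (L+N)x_n - b, y\ra = 0$ for $y\in X_n$ only holds against discrete test functions, so I test with $x_n-y_n\in X_n$ rather than with $x-x_n$. This produces, after adding and subtracting $(L+N)$ evaluated at intermediate points, the standard C\'ea-type inequality: the coercive left-hand side controls $\|x_n-y_n\|_X^2$ while the right-hand side is bounded by $C(\|x-y_n\|_X + (\text{coupling terms})\,\|x-x_n\|_X)\|x_n-y_n\|_X$; combining with the triangle inequality $\|x-x_n\|_X\le\|x-y_n\|_X+\|x_n-y_n\|_X$ and absorbing the small coupling contribution (again under the smallness hypothesis) yields $\|x-x_n\|_X\lesssim \|x-y_n\|_X$, and taking the infimum over $y_n$ gives the claim. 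Note uniqueness of $x$ is needed here to make ``its Galerkin approximation'' well-posed in the estimate, but Theorem~\ref{thm:exis} supplies existence and subsequential convergence, and once the bound is in place every accumulation point coincides with $x$.

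For the {\em a posteriori} bound I would test the error equation with $x-x_n$ directly: coercivity gives $\|x-x_n\|_X^2 \lesssim \la (L+N)x - (L+N)x_n,\, x-x_n\ra + (\text{coupling remainder})$, and since $(L+N)x=b$ the first term is $\la b - (L+N)x_n,\, x-x_n\ra \le \|b-(L+N)x_n\|_{X^*}\|x-x_n\|_X$; absorbing the small coupling remainder under the hypothesis and cancelling one factor of $\|x-x_n\|_X$ yields $\|x-x_n\|_X\lesssim \|Lx_n+Nx_n-b\|_{X^*}$. The main obstacle throughout is the careful nonlinear splitting and the choice of the scaling $\tau$ so that exactly the stated combination of $C_5$ and $C_6$ appears with the right $\delta$-dependence; the functional-analytic skeleton (coercivity plus absorption) is routine, but getting the constant in the hypothesis to be sharp enough that it degenerates correctly as $C_7\to 0$ — i.e.\ as the coupling weakens, matching the earlier ``small data relaxes'' theme — requires the scaled test-function trick rather than a naive estimate. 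A secondary technical point is that all $L^3$-norms of gradients are of the true solution $\phi$ (hence finite because $x\in Y$, indeed $\nabla\phi\in L^3$ by \eqref{eq:embed}), so the constants $C_5(\phi),C_6(\phi)$ are genuinely finite; one must be slightly careful that the Galerkin solution's gradient is controlled by $\|\nabla\phi\|_{L^3}$ plus the error, which is what lets the quadratic-in-error terms be absorbed.
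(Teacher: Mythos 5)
Your plan for the two error bounds is essentially the paper's own argument: the $\tau$-scaled norm on the second component, the splitting of the nonlinear differences into pieces controlled by $C_7\|\nabla\phi\|_{L^3}$, $\sigma^\circ$, $g^\circ-g_\circ$ and $\|\nabla g_\phi\|_{L^3}$ (i.e.\ the constants $C_5(\phi)$, $C_6(\phi)$), the optimal choice of the Young parameter and of $\tau$ to produce exactly the threshold $\frac{(1-\delta)^2\sigma_0}{C_6+(1-\delta)\sigma_0}$, absorption of the error terms, and for the {\em a posteriori} bound the pairing of the residual $b-(L+N)x_n$ with the (scaled) error followed by the same absorption. Your C\'ea variant (testing with $x_n-y_n$ and then using the triangle inequality) is only a cosmetic reshuffling of the paper's computation, which instead estimates $\|x-x_n\|_{X^\tau}^2$ directly and splits the test function as $(x-y_n)+(y_n-x_n)$, using $\la\sigma(u)\nabla\phi,\nabla\tilde\psi\ra=\la\sigma(u_n)\nabla\phi_n,\nabla\tilde\psi\ra=0$ for discrete $\tilde\psi$; just make sure your bookkeeping reproduces the stated constant rather than a qualitatively similar one. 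The one genuine difference is uniqueness: you prove it directly by an energy/contraction estimate on the difference of two solutions of \eqref{eq:wse} (tested with $\tilde\phi-\tilde\phi'$ and $\tilde u-\tilde u'$, which is legitimate since \eqref{eq:wse} admits $H^1$ test functions, with the splittings arranged so that only $\|\nabla\phi\|_{L^3}$ of the solution satisfying the smallness hypothesis appears), whereas the paper deduces uniqueness as a corollary of the {\em a priori} bound: by density of $\bigcup_n X_n$ the Galerkin approximations converge to every solution to which the bound applies, so two such solutions coincide. Your route is self-contained (it does not use the Galerkin family or density at all, and in fact only needs the second solution to lie in $X$), while the paper's is a one-line consequence of the quasi-optimality already established; both yield the statement, so I see no gap, only this alternative treatment of the uniqueness step.
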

\begin{proof}
We introduce for $(\tilde{\varphi}, \tilde{v}) \in X$ the norm:
\[
\|(\tilde{\varphi}, \tilde{v})\|^2_{X^\tau}:= \sigma_0\|\nabla \tilde{\varphi} \|^2_{L^2(\Omega)}+\tau^2 \| \nabla \tilde{v} \|^2_{L^2(\Omega)} + \tau^2 \|\sqrt{\kappa} \tilde{v} \|^2_{L^2(R_u)}.
\]
We pick a solution $x=(\tilde{\phi},\tilde{u})\in Y$. For any function pair $(\psi_n,v_n)=(\tilde{\psi}_n+g_\phi,\tilde{v}_n+g_u)$ with  $(\tilde{\psi}_n,\tilde{v}_n)\in X_n$, we have,
\begin{align*}
&\|(\phi-\phi_n,u-u_n)\|^2_{X^\tau}\leq \la \sigma(u_n)\nabla(\phi-\phi_n),\nabla(\phi-\phi_n)\ra\\
&\qquad +\tau^2\|\nabla(u-u_n)\|^2_{L^2(\Omega)}+\tau^2\|\sqrt{\kappa} (u-u_n)\|^2_{L^2(R_u)} \\
&\leq \la\sigma(u_n)\nabla(\phi-\phi_n),\nabla(\phi-\psi_n)\ra+\tau^2\la\nabla(u-u_n),\nabla(u-v_n)\ra
\\
&\qquad +\tau^2\la\kappa (u-u_n),u-v_n\ra_{R_u} +\la\sigma(u_n)\nabla(\phi-\phi_n),\nabla(\psi_n-\phi_n)\ra\\
&\qquad +\tau^2\la\nabla(u-u_n),\nabla(v_n-u_n)\ra+\tau^2\la\kappa ( u-u_n),v_n-u_n\ra_{R_u}\\
&\stackrel{(*)}{\leq} \frac{\sigma^\circ}{\sigma_\circ}\|(\phi-\phi_n,u-u_n)\|_{X^\tau}\|(\phi-\psi_n,u-v_n)\|_{X^\tau}
\\
&\qquad + C_7 C_{8}\|\nabla \phi\|_{L^3(\Omega)}\|u-u_n\|_{H^1(\Omega)}\|\nabla(\phi_n-\psi_n)\|_{L^2(\Omega)}\\
&\qquad +\tau^2\|\sigma(u)\lceil\tilde{\phi}\rceil\nabla \phi-\sigma(u_n)\lceil\tilde{\phi}_n\rceil\nabla \phi_n\|_{L^2(\Omega)}\|\nabla (v_n-u_n)\|_{L^2(\Omega)}\\
&\qquad +C_{8} \tau^2\|\nabla g_\phi\|_{L^3(\Omega)}\|\sigma(u)\nabla\phi-\sigma(u_n)\nabla\phi_n\|_{L^2(\Omega)}\|v_n-u_n\|_{H^1(\Omega)}\\
&\leq \frac{\sigma^\circ}{\sigma_\circ}\|(\phi-\phi_n,u-u_n)\|_{X^\tau}\|(\phi-\psi_n,u-v_n)\|_{X^\tau}
\\
&\qquad +C_6\tau^2\|\nabla (\phi-\phi_n)\|_{L^2(\Omega)}\|\nabla (v_n-u_n)\|_{L^2(\Omega)}\\
&\qquad +C_5\big(\|\nabla(\phi_n-\psi_n)\|_{L^2(\Omega)}+\tau^2\|\nabla (v_n-u_n)\|_{L^2(\Omega)}\big)\|\nabla(u-u_n)\|_{L^2(\Omega)},
\end{align*}
where we in $(*)$ use that $\la \sigma(u)\nabla\phi,\nabla \tilde{\psi}\ra=\la \sigma(u_n)\nabla\phi_n,\nabla \tilde{\psi}\ra=0$ for any $\tilde{\psi}\in X_n$.
We now use the triangle inequality on $\|\nabla(\psi_n-\phi_n)\|_{L^2(\Omega)}\leq \|\nabla(\phi-\phi_n)\|_{L^2(\Omega)}+\|\nabla(\phi-\psi_n)\|_{L^2(\Omega)}$ and $\|\nabla(v_n-u_n)\|_{L^2(\Omega)}\leq \|\nabla(u-u_n)\|_{L^2(\Omega)}+\|\nabla(u-v_n)\|_{L^2(\Omega)}$ and introduce 
$C_{10}=\sigma^\circ\sigma_\circ^{-1}+C_6\sigma_\circ^{-1}\tau+C_5\sigma_\circ^{-1}\tau^{-1}+C_5\tau^{-1}$ to get,
\begin{align*}
&\|(\phi-\phi_n,u-u_n)\|^2_{X^\tau}\leq C_{10}\|(\phi-\phi_n,u-u_n)\|_{X^\tau}\|(\phi-\psi_n,u-v_n)\|_{X^\tau}
\\
&\qquad +C_6\tau^2\|\nabla (\phi- \phi_n)\|_{L^2(\Omega)}\|\nabla (u-u_n)\|_{L^2(\Omega)}\\
&\qquad +C_5\big(\|\nabla(\phi-\phi_n)\|_{L^2(\Omega)}+\tau^2\|\nabla (u-u_n)\|_{L^2(\Omega)}\big)\|\nabla(u-u_n)\|_{L^2(\Omega)},\\
&\leq
C_{10}\|(\phi-\phi_n,u-u_n)\|_{X^\tau}\|(\phi-\psi_n,u-v_n)\|_{X^\tau} \\&\qquad +\frac{1}{2\epsilon}(C_5+C_6 \tau^2)\|\nabla(\phi-\phi_n)\|^2_{L^2(\Omega)}\\
&\qquad +\big(\frac{\epsilon}{2}(C_5+C_6 \tau^2)+C_5 \tau^2\big)\|\nabla(u-u_n)\|^2_{L^2(\Omega)}.
\end{align*}
for any $\epsilon>0$. We now want to find the maximum value of $C_5$ while fulfilling $\frac{1}{2\epsilon}(C_5+C_6 \tau^2) \leq (1-\delta)\sigma_0$ and $\big(\frac{\epsilon}{2}(C_5+C_6 \tau^2)+C_5 \tau^2\big) \leq (1-\delta)\tau^2$, for some $0<\delta<1$, since  we then can subtract $(1-\delta)\|(\phi-\phi_n,u-u_n)\|^2_{X^\tau}$ on both sides of the equality sign. Algebraic manipulation reveals that by choosing $\epsilon$ equal to $\epsilon^*=\frac{1-\delta}{C_6+(1-\delta)\sigma_0}>0$ and  $\tau^2=(\tau^*)^2:=\frac{\epsilon^*(1-\delta)\sigma_0}{C_6}>0$ leads to a value $C_5=\frac{(1-\delta)^2\sigma_0}{C_6+(1-\delta)\sigma_0}$. Under the assumptions in the statement of the theorem we therefore have,
\[
\|(\phi-\phi_n,u-u_n)\|_{X^{\tau^*}}\leq \delta^{-1}C_{10}(\tau^*,\sigma_0,\sigma^0,C_5,C_6)\|(\phi-\psi_n,u-v_n)\|_{X^{\tau^*}}
\]
for that fixed $\tau^*>0$.
The {\em a priori} bound now follows using the Poincar\'{e}-Friedrichs inequality and simple algebraic manipulation, since the $X^\tau$-norm and the $X$-norm are equivalent for fixed $\tau$.

Suppose we have two solutions $x_1$ and $x_2$. The {\em a priori} bound gives that any Galerkin approximation will eventually get arbitrary close to both solutions which means that they coincide, i.e.~$x_1=x_2$.

Only the {\em a posteriori} bound now remains to prove. It follows continuing ideas as in the {\em a priori} bound. We have, following the step $(*)$ and below,
\begin{align*}
\|(&\phi-\phi_n,u-u_n)\|^2_{X^\tau}\leq \la \sigma(u_n)\nabla(\phi-\phi_n),\nabla(\phi-\phi_n)\ra\\
&\qquad +\tau^2\la\nabla(u-u_n),\nabla(u-u_n)\ra+\tau^2\la\kappa (u-u_n),(u-u_n)\ra_{R_u}\\
&\leq
\la b-L(\tilde{\phi}_n,\tilde{u}_n)-N(\tilde{\phi}_n,\tilde{u}_n),(\phi-\phi_n,\tau^2(u-u_n))\ra
\\
&\qquad +C_6 \tau^2 \|\nabla(\phi-\phi_n)\|_{L^2(\Omega)}\|\nabla(u-u_n)\|_{L^2(\Omega)}
\\ &\qquad
+C_5\|\nabla(u-u_n)\|_{L^2(\Omega)}\left(\|\nabla(\phi-\phi_n)\|_{L^2(\Omega)}+\tau^2\|\nabla(u-u_n)\|_{L^2(\Omega)}\right),
\end{align*}
Under the assumptions of the theorem we can now repeat the exact same argument as above picking $\epsilon^*$ and $\tau^*$ in the same way. We get,
\begin{align*}
\|x-x_n\|^2_{X}&\lesssim
\|x-x_n\|^2_{X^{\tau^*}}
\leq \delta^{-1}\max(1,(\tau^*)^2)\la b-L x_n-N x_n,x-x_n\ra\\
&\lesssim \|L x_n+N x_n-b\|_{X^*}\|x-x_n\|_X,
\end{align*} 
completing the proof of the theorem. 
\end{proof}

\begin{remark}
Since $C_5(\phi)$ and $C_6(\phi)$ depend on $\|\nabla \phi\|_{L^3(\Omega)}$ a bound on this norm needs to be computed in order to obtain a computable {\em a posteriori} error bound. In the setting of creased domains this can in principle be derived from the radius $r$ in Lemma \ref{thm:com}.
\end{remark}

\subsection{Error bounds for $\s h$-adaptive finite element approximations}

The local regularity result presented in Theorem \ref{thm:loc} indicates that the problem is well suited for adaptive finite element methods. Suppose that $\mathcal{T}_\phi$ and $\mathcal{T}_u$ are decompositions of $\Omega$ into tetrahedrons. We let $\s{h}_T$ denote the diameter of element $T$ and assume that $\mathcal{T}_\phi$ and $\mathcal{T}_u$ are nondegenerate, i.e. there is a mesh-independent constant $\gamma$ such that
\[\max_{T}\frac{\s{h}_T}{d_T}\leq\gamma,\]
where $d_T$ is the diameter of the largest ball contained in $T$. We denote the set of elements $T'$  neighbouring $T$ by $S_T=\{\cup T: T' \cap T\neq\emptyset\}$, assuming that elements are closed. Further let $\mathcal{E}_{I_\phi}$, $\mathcal{E}_{I_u}$  be the set of all interior facets of the two meshes and let $\mathcal{E}_{N_\phi}$, $\mathcal{E}_{R_u}$ be the set of boundary facets on $N_\phi$ and $R_u$, respectively. It is implicitly supposed here that $N_\phi$ and $R_u$ are unions of edges. We denote the set of all elements neighboring $e$ by $S_e=\{\cup e:T\cap e\neq\emptyset\}$. The diameter of a facet $e$ is denoted $\s{h}_e$. The spaces of polynomials of total or partial degree less than or equal to $k$, defined on $T$, is denoted $P_k(T)$. Let
\begin{align}\label{Xn} \left. \begin{array}{rl}
P_n := & \{ \varphi \in C(\Omega): \varphi |_T \in P_k(T) \mbox{ and } \varphi|_{D_\phi} = 0\},\\[1mm]
U_n := & \{v \in C(\Omega): v|_T \in P_\ell(T) \mbox{ and } v|_{D_u} = 0\},
\end{array} \quad \right\} \end{align}
and $X_n = P_n \times U_n$. We let $\pi_\phi: H^1(\Omega)\rightarrow P_n$ and $\pi_u: H^1(\Omega)\rightarrow U_n$ be Scott-Zhang interpolants, as defined in \cite{ScZh90}. We recall the interpolation bounds
\begin{align}\label{int1} \left. \begin{array}{lll}
\|v-\pi_u v\|_{W_2^s(T)} & \, \lesssim \s{h}_T^{r-s} & |v|_{W^{r}_2(S_T)},\\[1mm]
\|v-\pi_u v\|_{L^2(e)} & \, \lesssim \s{h}_e^{1/2} & |v|_{W^1_2(S_e)},
\end{array} \quad \right\} \end{align}
for $v\in W_2^r(\Omega; D_u)$, $r \in [1, k+1]$, $s \in [0,r]$ and where $|\cdot|_{W^{r}_2}$ is the semi-norm only including the derivatives of order $r$. An analogous bound also holds for $\pi_\phi$. From here on we drop the subscripts of $\pi_u$ and $\pi_\phi$ since it will be clear from the context which operator is meant.

\begin{theorem}
Let $x=(\tilde{\phi},\tilde{u})\in Y$ be a solution of (\ref{eq:wsg}) and $x_n\in X_n$ be its Galerkin approximation. Then we have the {\em a priori} error estimate,
\[
\| x-x_n\|_X^2 \lesssim \sum_{T\in\mathcal{T}_\phi}\s{h}_T^{2(r-1)}|\tilde{\phi}|^2_{W_{2}^{r}(S_T)}+\sum_{T\in\mathcal{T}_u}\s{h}_T^{2(s-1)}|\tilde{u}|^2_{W_{2}^{s}(S_T)},
\]
with $1\leq r\leq k+1$ and $1\leq s\leq \ell+1$, and the {\em a posteriori} error estimate,
\begin{align*}
\| x-x_n\|_X^2 \lesssim \, \phantom{+} & \sum_{T\in\mathcal{T}_\phi}\s{h}_T^2\eta_{T}^2+\sum_{e\in\mathcal{E}_{I_\phi}}\s{h}_e\eta_{e,I}^2+\sum_{e\in\mathcal{E}_{N_\phi}}\s{h}_e\eta_{e,N}^2\\
 + & \sum_{T\in\mathcal{T}_u}\s{h}_T^2\rho_{T}^2+\sum_{e\in\mathcal{E}_{I_u}}\s{h}_e\rho_{e,I}^2+\sum_{e\in\mathcal{E}_{R_u}}\s{h}_e\rho_{e,R}^2,
\end{align*}
where
\begin{align*}
\eta_{T}(\phi_n,u_n)=&\,\|\nabla\cdot\sigma(u_n)\nabla\phi_n\phantom{[}\|_{L^2(T)},\\
\eta_{e,I}(\phi_n,u_n)=&\,\|\nu\cdot[\sigma(u_n)\nabla \phi_n]\|_{L^2(e)},\\
\eta_{e,N}(\phi_n,u_n)=&\,\|\nu\cdot\phantom{[}\sigma(u_n)\nabla \phi_n\phantom{]}\|_{L^2(e)},\\
\rho_{T}(\phi_n,u_n)=&\,\|\nabla \cdot (\nabla u_n\,+\,\sigma(u_n)\lceil \tilde{\phi}_n\rceil\nabla\phi_n)+\sigma(u_n)\nabla g_\phi\cdot\nabla\phi_n\|_{L^2(T)},\\
\rho_{e,I}(\phi_n,u_n)=&\,\| \nu \cdot ( [\nabla u_n] + \sigma(u_n) \lceil \tilde{\phi}_n\rceil[\nabla\phi_n] ) \|_{L^2(e)},\\
\rho_{e,R}(\phi_n,u_n)=&\,\| \nu\cdot( \phantom{[}\nabla u_n\phantom{]} + \sigma(u_n)\lceil \tilde{\phi}_n\rceil\phantom{[}\nabla\phi_n\phantom{]})+\kappa u_n-h\|_{L^2(e)}.
\end{align*}
\end{theorem}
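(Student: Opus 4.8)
The plan is to combine the abstract bounds of Theorem~\ref{thm:apriori} with the Scott--Zhang interpolation estimates \eqref{int1}. For the \emph{a priori} estimate, I would start from the quasi-optimality bound $\| x-x_n\|_X \lesssim \inf_{y_n \in X_n}\|x-y_n\|_X$ and simply choose the competitor $y_n = (\pi_\phi \tilde\phi, \pi_u \tilde u)$. Since $(\tilde\phi,\tilde u)\in Y$ and the local regularity Theorem~\ref{thm:loc} (together with the global regularity of Theorem~\ref{thm:reg}) guarantees the Sobolev regularity needed to apply \eqref{int1} with the relevant exponents $r\in[1,k+1]$, $s\in[1,\ell+1]$, I would bound $\|\tilde\phi - \pi_\phi\tilde\phi\|^2_{H^1(\O)} \le \sum_{T\in\mathcal T_\phi}\|\tilde\phi - \pi_\phi\tilde\phi\|^2_{W_2^1(T)} \lesssim \sum_{T}\s h_T^{2(r-1)}|\tilde\phi|^2_{W_2^r(S_T)}$ using the case $s=1$ of the first line of \eqref{int1}, and analogously for $\tilde u$. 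Summing the two contributions gives the stated \emph{a priori} bound; the shape-regularity assumption ensures that the patches $S_T$ overlap only a bounded number of times, so the sum over patches is controlled.

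For the \emph{a posteriori} estimate I would start from the residual bound $\|x-x_n\|_X \lesssim \|Lx_n + Nx_n - b\|_{X^*}$ of Theorem~\ref{thm:apriori} and estimate the dual norm of the residual in the standard way. Writing the residual tested against $(\psi,w)\in X = H^1(\O;D_\phi)\times H^1(\O;D_u)$, I would use Galerkin orthogonality to subtract the Scott--Zhang interpolant $(\pi_\phi\psi, \pi_u w)$, then integrate by parts element-wise on each mesh $\mathcal T_\phi$, $\mathcal T_u$ to produce interior element residuals and facet jump/boundary residuals. For the first component this yields $\eta_T$ on elements, $\eta_{e,I}$ on interior facets and $\eta_{e,N}$ on the Neumann part $N_\phi$; for the second component, using the equivalent formulation \eqref{eq:wse} with the cut-off $\lceil\tilde\phi_n\rceil$ (which is exactly what makes $\rho_T$, $\rho_{e,I}$, $\rho_{e,R}$ take the stated form), one gets the corresponding $\rho$ terms, with $\kappa u_n - h$ appearing on the Robin part $R_u$. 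Applying Cauchy--Schwarz together with the interpolation estimates \eqref{int1} — the volume estimate with $s=0$ giving the factor $\s h_T^2$ and the trace estimate giving the factor $\s h_e$ — and then a discrete Cauchy--Schwarz over elements and facets produces $\|Lx_n+Nx_n-b\|_{X^*}^2 \lesssim \sum_T \s h_T^2\eta_T^2 + \sum_e \s h_e\eta_{e,I}^2 + \cdots$, which is the claimed bound.

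The main obstacle, I expect, is the nonlinear and coupled structure of the operator $N$: unlike a linear problem the residual $Lx_n+Nx_n-b$ is not simply a linear functional of the data, so the element-wise integration by parts must be carried out carefully on the terms $\sigma(u_n)\nabla\phi_n$, $\sigma(u_n)\lceil\tilde\phi_n\rceil\nabla\phi_n$ and $\sigma(u_n)\nabla g_\phi\cdot\nabla\phi_n$, keeping track of which pieces land in the interior residual and which in the facet jumps (the diffusion coefficient $\sigma(u_n)$ is continuous across facets since $u_n$ is, but its gradient is not, hence the jump terms involve only $[\nabla\phi_n]$ and $[\nabla u_n]$). A secondary technical point is that the Scott--Zhang interpolant respects the homogeneous Dirichlet conditions on $D_\phi$ and $D_u$, so the boundary terms on the Dirichlet parts vanish and only $N_\phi$ and $R_u$ contribute; one must also note that $\sigma(u_n)\lceil\tilde\phi_n\rceil\nabla\phi_n$ is piecewise smooth so the element-wise divergence in $\rho_T$ is well defined. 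Finally, the implicit constants depend on $\gamma$, on $\sigma_\circ,\sigma^\circ$, on the constants $C_5(\phi),C_6(\phi)$ and on the small-data threshold $\delta$ from Theorem~\ref{thm:apriori}, but not on the mesh, which is what makes the estimate useful for adaptivity.
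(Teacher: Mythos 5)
Your proposal is correct and follows essentially the same route as the paper: quasi-optimality from Theorem~\ref{thm:apriori} plus the Scott--Zhang estimate \eqref{int1} with $s=1$ for the \emph{a priori} bound, and the residual dual-norm bound combined with Galerkin orthogonality, element-wise integration by parts, \eqref{int1} with $s=0$, $r=1$, the trace estimate, and discrete Cauchy--Schwarz for the \emph{a posteriori} bound. The additional remarks on the continuity of $\sigma(u_n)$ and $\lceil\tilde\phi_n\rceil$ across facets and on the interpolant respecting the Dirichlet conditions are consistent with (and implicit in) the paper's argument.
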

\begin{proof}
From Theorem \ref{thm:apriori} we have,
\[
\| x-x_n\|_X\lesssim \inf_{y_n\in X_n}\|x-y_n\|_X\leq \big(\|\tilde{\phi}-\pi\tilde{\phi}\|^2_{H^1}+\|\tilde{u}-\pi \tilde{u}\|^2_{H^1}\big)^{1/2}.
\]
The {\em a priori} part of the theorem follows from (\ref{int1}) with~$s=1$.

We turn to the {\em a posteriori} bound. For any $y=(\tilde{\varphi},\tilde{v})\in X$ we have
\begin{align*}
\langle L x_n+N x_n-b,y\rangle= \phantom{+} & \, \langle \sigma(u_n)\nabla\phi_n,\nabla (\tilde{\varphi}-\pi\tilde{\varphi})\rangle +\langle \nabla u_n,\nabla (\tilde{v}-\pi \tilde{v})\rangle\\
+ & \, \langle\kappa u_n,\tilde{v} -\pi \tilde{v}\rangle +\langle \sigma(u_n)\lceil \tilde{\phi}_n\rceil\nabla\phi_n,\nabla (\tilde{v}-\pi \tilde{v})\rangle\\[1mm]
- & \, \langle\sigma(u_n)\nabla g_\phi\cdot\nabla\phi_n,\tilde{v}-\pi \tilde{v}\rangle-\langle h,\tilde{v}-\pi \tilde{v}\rangle_{R_u}.
\end{align*}
We can subtract the interpolants because of Galerkin orthogonality. We apply Green's formula on the elements of the meshes together with the Cauchy-Schwarz inequality to obtain
\begin{align*}
&\langle L x_n+N x_n-b,y\rangle\lesssim\sum_{T\in\mathcal{T}_\phi}\| \nabla\cdot (\sigma(u_n)\nabla\phi_n) \|_{L^2(T)} \, \|\tilde{\varphi}-\pi\tilde{\varphi}\|_{L^2(T)}\\
&\  +\sum_{e\in\mathcal{E}_{I_\phi}}\| \, \nu\cdot[\sigma(u_n)\nabla \phi_n]\|_{L^2(e)}\|\tilde{\varphi}-\pi\tilde{\varphi}\|_{L^2(e)}\\
&\  +\sum_{e\in\mathcal{E}_{N_\phi}}\| \nu\cdot\phantom{[}\sigma(u_n)\nabla \phi_n\phantom{]}\|_{L^2(e)}\|\tilde{\varphi}-\pi\tilde{\varphi}\|_{L^2(e)}\\
&\ +\sum_{T\in\mathcal{T}_u}\| \nabla \cdot (\nabla u_n\,+\,\sigma(u_n)\lceil \tilde{\phi}_n\rceil\nabla\phi_n))+\sigma(u_n)\nabla g_\phi\cdot\nabla\phi_n\|_{L^2(T)} \,
\|\tilde{v}-\pi \tilde{v}\|_{L^2(T)}\\
&\  +\sum_{e\in\mathcal{E}_{I_u}} \| \nu \cdot ( [\nabla u_n] + \sigma(u_n) \lceil \tilde{\phi}_n\rceil[\nabla\phi_n] )\|_{L^2(e)} \, \|\tilde{v}-\pi \tilde{v}\|_{L^2(e)}\\
&\
+\sum_{e\in\mathcal{E}_{R_u}}\| \nu\cdot(\nabla u_n+ \sigma(u_n)\lceil \tilde{\phi}_n\rceil\nabla\phi_n)+\kappa u_n-h \|_{L^2(e)} \, \|\tilde{v}-\pi \tilde{v}\|_{L^2(e)}
\end{align*}
Use of \eqref{int1}, with $s=0$ and $r=1$, and the Cauchy-Schwarz inequality gives
\begin{align*}
\frac{\langle L x_n+N x_n-b,y\rangle}{\|y\|_X}&\lesssim \bigl(\sum_{T\in\mathcal{T}_\phi}\s{h}_T^2\eta_{T}^2+\sum_{e\in\mathcal{E}_{I_\phi}}\s{h}_e\eta_{e,I}^2+\sum_{e\in\mathcal{E}_{N_\phi}}\s{h}_e\eta_{e,N}^2\bigr)^{1/2}\\
&\qquad +\bigl(\sum_{T\in\mathcal{T}_u}\s{h}_T^2\rho_{T}^2+\sum_{e\in\mathcal{E}_{I,u}}\s{h}_e\rho_{e,I}^2+\sum_{e\in\mathcal{E}_{R_u}}\s{h}_e\rho_{e,R}^2\bigr)^{1/2}.
\end{align*}
The theorem follows by taking supremum over all $y\in X$. 
\end{proof}

\begin{remark}
If the Dirichlet data $g_\phi$ and $g_u$ are not traces of finite element functions then an extra data error term will appear in the error estimates. Similarly if exact quadrature is not used additional error terms have to be considered. We have neglected these standard terms in the analysis to make it more readable. For more details, see \cite{ciarlet}.
\end{remark}

\begin{remark}
For additional flexibility one can consider $hp$ finite element spaces. Theorem \ref{thm:loc} indicates that even though the choice of the space $Y$ is sharp, the interior regularity of the solution is higher. This is an ideal setting for $hp$-finite element methods. The above {\em a priori} and {\em a posteriori} error bounds can be transferred to $hp$ approximation spaces for which suitable interpolation operators are available. Some ideas for the construction of such operators are collected in~\cite{MeSa11} and references therein.
\end{remark}

% ------------------------------------------------------------------------------------------------------------------------

% ------------------------------------------------------------------------------------------------------------------------

\end{document}